\newcommand{\BZ}{{\mathbb{Z}}}
\newcommand{\BN}{{\mathbb{N}}}
\newcommand{\BR}{{\mathbb{R}}}
\newcommand{\BQ}{{\mathbb{Q}}}
\newcommand{\QQ}{{\mathbf{Q}}}
\newcommand{\RR}{{\mathbf{R}}}
\newcommand{\gd}{\delta}
\newcommand{\gC}{\Gamma}
\newcommand{\gS}{\Sigma}
\newcommand{\gO}{\Omega}
\newcommand{\gep}{\epsilon}
\newcommand{\ga}{\alpha}
\newcommand{\Ad}{\mathrm{Ad}}
\newcommand{\vol}{\mathrm{vol}}
\newcommand{\SL}{\mathrm{SL}}
\newcommand{\PSL}{\mathrm{PSL}}
\newcommand{\SO}{\mathrm{SO}}
\newcommand{\irs}{\mathrm{IRS}}
\newcommand{\sub}{\mathrm{Sub}}
\newcommand{\act}{\curvearrowright}
\newtheorem{prop}{Proposition}[section]
\newtheorem{thm}[prop]{Theorem}
\newtheorem{cor}[prop]{Corollary}
\newtheorem*{conj*}{Conjecture}
\theoremstyle{definition}
\newtheorem{defn}[prop]{Definition}
\newtheorem{rem}[prop]{Remark}
\newtheorem*{rem*}{Remark}
\newtheorem{exam}[prop]{Example}
\newtheorem{prob}[prop]{Problem}
\newtheorem{ques}[prop]{Question}
\newtheorem{theorem}{Theorem}
\newtheorem{definition}[theorem]{Definition}
\newtheorem{exercise}[theorem]{Exercise}
\newcommand{\blue}[1]{{\color{blue}{#1}}}
\title{Lecture notes on Invariant Random Subgroups and Lattices in rank one and higher rank}
\author{Tsachik Gelander\thanks{I would like to thank Tal Cohen, Gil Goffer, Arie Levit and the referee for corrections and valuable advices concerning the preliminary version.}}
\begin{document}
\maketitle

Invariant random subgroups (IRS) are conjugacy invariant probability measures on the space of subgroups in a given group $G$. They can be regarded both as a generalization of normal subgroups as well as a generalization of lattices. As such, it is intriguing to extend results from the theories of normal subgroups and of lattices to the context of IRS. Another approach is to analyse and then use the space IRS$(G)$ as a compact $G$-space in order to establish new results about lattices. The second approach has been taken in the work \cite{7},  that came to be known as the seven samurai paper.  
In these lecture notes we shall try to give a taste of both approaches.

\section{The Chabauty space of closed subgroups}
Let $G$ be a locally compact group. We denote by $\text{Sub}_G$ the space of closed subgroups of $G$ equipped with the Chabauty topology. This topology is generated by sets of the following two types: 
\begin{enumerate}
\item $O_1(U):=\{ H\in\text{Sub}_G:H\cap U\ne\emptyset\}$ with $U\subset G$ an open subset, and
\item $O_2(K):=\{ H\in\text{Sub}_G:H\cap K=\emptyset\}$ with $K\subset G$ a compact subset.
\end{enumerate}

\begin{exercise}
Show that a sequence $H_n\in\text{Sub}_G$ converges to a limit $H$ iff
\begin{itemize}
\item for any $h\in H$ there is a sequence $h_n\in H_n$ such that $h=\lim h_n$, and
\item for any sequence $h_{n_k}\in H_{n_k}$, with $n_{k+1}>n_k$, which converges to a limit, we have $\lim h_{n_k}\in H$.
\end{itemize}
\end{exercise}

\begin{exercise}[Suggested by Ian Biringer]
The space $\text{Sub}_G$ is metrisable. Indeed, let $d$ be a proper metric on $G$ and $d_H$ the corresponding Hausdorff distance between compact subsets of $G$. Show that
$$
 \rho(H_1,H_2):=\int_0^\infty d_\text{H}\big( H_1\cap B_r(id_G),H_2\cap B_r(id_G)\big)e^{-r}dr
$$
is a compatible metric on $\text{Sub}_G$.
\end{exercise}

\begin{exam} 
\begin{enumerate}
\item $\text{Sub}_\BR\sim [0,\infty ]$. Indeed, every proper non-trivial closed subgroup of $\BR$ is of the form $\ga\BZ$ for some $\ga>0$. When $\ga\to 0$ the corresponding group tends to $\BR$ and when $\ga\to \infty$ it tends to $\{ 1\}$.

\item $\text{Sub}_{\BR^2}$ is homeomorphic to the sphere $S^4$ (this was proved by Hubbard and Pourezza, see \cite{HP}).

\item {\it Question:} What can you say about $\text{Sub}_{\BR^n}$ (cf. \cite{Kl})?
\end{enumerate}
\end{exam}

One direction which seems interesting to study
is the case of semisimple Lie groups. Indeed, much is known about discrete and general closed subgroups of (semisimple) Lie groups, and it is possible to deduce information about the structure of $\text{Sub}_G$. 

\begin{prob}
What can you say about $\text{Sub}_G$ for $G=\SL_2(\BR)$?
\end{prob}

\subsection{Compactness}

While the structure of $\text{Sub}_G$ in general is highly complicated, we at least know that it is always compact:

\begin{prop}(Exercise)
For every locally compact group $G$, the space $\text{Sub}_G$ is compact.
\end{prop}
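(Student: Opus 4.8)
The plan is to establish compactness by showing that every net in $\text{Sub}_G$ admits a convergent subnet; this avoids any countability assumption on $G$ (when $G$ is second countable one could instead invoke the metrisability from the previous exercise and run the same idea with sequences, extracting a diagonal subsequence over a countable dense subset of $G$).

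First I would fix an arbitrary net $(H_\alpha)$ in $\text{Sub}_G$ and pass to a universal (ultra-) subnet $(H_\beta)$, so that for every subset $\mathcal S$ of the hyperspace of closed subsets of $G$ the net is \emph{eventually} in $\mathcal S$ or eventually in its complement. I then define the candidate limit
$$
H:=\{\,g\in G : \text{for every neighbourhood } V \text{ of } g,\ H_\beta\cap V\neq\emptyset \text{ eventually}\,\}.
$$
The verification has three steps. (1) $H$ is closed in $G$: if $g\in\overline H$ then every open neighbourhood of $g$ is also a neighbourhood of some point of $H$, hence is eventually met by $H_\beta$. (2) $H_\beta\to H$ in the Chabauty topology: membership of $H$ in a subbasic set $O_1(U)$ forces, by the defining property of $H$ applied to a point of $H\cap U$, that $H_\beta\in O_1(U)$ eventually; and membership of $H$ in $O_2(K)$ with $K$ compact is handled by covering $K$ by finitely many open sets each of which is eventually missed by $H_\beta$ — each point of $K$ lies outside $H$, hence has such a neighbourhood precisely because the subnet is universal — and then using that a finite intersection of eventual conditions is eventual. (3) $H$ is a subgroup: the identity lies in every $H_\beta$, hence in $H$; and if $a,b\in H$, then given a neighbourhood $W$ of $ab^{-1}$, continuity of $(x,y)\mapsto xy^{-1}$ supplies neighbourhoods $A\ni a$, $B\ni b$ with $AB^{-1}\subseteq W$, the net eventually meets both $A$ and $B$, and picking $a_\beta\in H_\beta\cap A$, $b_\beta\in H_\beta\cap B$ gives $a_\beta b_\beta^{-1}\in H_\beta\cap W$, so $ab^{-1}\in H$. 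Combining (1)–(3), $H\in\text{Sub}_G$ and $H_\beta\to H$, so $\text{Sub}_G$ is compact.

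The step I expect to be the crux is (2) for the compact sets $K$: this is the only place where local compactness genuinely enters — it upgrades "each point of $K$ has a neighbourhood that is eventually avoided" to "$K$ itself is eventually avoided" — and it is also what forces one to work with a universal subnet (or, in the sequential formulation, with a single diagonal subsequence treating all basepoints simultaneously), so that the finitely many "eventually" statements can be intersected. Everything else is a routine unwinding of the definitions of the Chabauty subbasis and of the continuity of the group operations.
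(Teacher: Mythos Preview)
Your argument is correct. The paper states this proposition as an exercise and supplies no proof of its own, so there is nothing to compare against; your universal-subnet approach is one of the standard clean routes to compactness of the Fell/Chabauty hyperspace.

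One small remark on your closing commentary: in step~(2) for $O_2(K)$ the upgrade from ``each point of $K$ has a neighbourhood eventually avoided'' to ``$K$ itself is eventually avoided'' uses the compactness of $K$ (finite subcover), not the local compactness of $G$. In fact the hyperspace of closed subsets with the Fell topology is compact for \emph{any} topological space; local compactness of $G$ is what makes $\text{Sub}_G$ Hausdorff (so that your limit $H$ is unique), which the paper is tacitly using but which is not part of the bare compactness claim. This does not affect the validity of your proof.
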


We can use $\text{Sub}_G$ in order to compactify certain sets of closed subgroups. For instance one can study the Chabauty compactification of the space of lattices in $G$. In particular, it is interesting to determine the points of that compactification:

\begin{prob}
Determine which subgroups of $\SL_3(\BR)$ are limit of lattices.
\end{prob}

This problem might be more accessible if we replace $\SL_3(\BR)$ with a group for which the congruence subgroup property is known for all lattices.

\subsection{When is $G$ isolated?}


It is useful to know under which conditions $G$ is an isolated point in $\text{Sub}_G$.

\begin{exercise}
A discrete group $\Gamma$ is isolated (as a point in) in $\text{Sub}_\Gamma$ iff it is finitely generated.
\end{exercise}

Let us examine some non-discrete cases: 

\begin{prop}\label{prop:G-isolated}
Let $G$ be a connected simple Lie group. Then $G$ is an isolated point in $\text{Sub}_G$.
\end{prop}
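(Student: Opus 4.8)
The plan is to argue by contradiction. Suppose $G$ is not isolated; then there is a sequence of proper closed subgroups $H_n\neq G$ with $H_n\to G$ in the Chabauty topology, and the goal is to derive a contradiction. By Cartan's closed subgroup theorem each $H_n$ is a Lie subgroup, and since $G$ is connected the only connected subgroup whose Lie algebra is all of $\mathfrak{g}$ is $G$ itself; hence $H_n\neq G$ forces $\mathrm{Lie}(H_n)\subsetneq\mathfrak{g}$, in particular $\dim H_n\le\dim G-1$. Passing to a subsequence I may assume $\dim H_n=d$ is constant, and, using compactness of the Grassmannian $\mathrm{Gr}(d,\mathfrak{g})$, that $\mathfrak{h}_n:=\mathrm{Lie}(H_n)$ converges to some $d$-dimensional subspace $\mathfrak{h}\subsetneq\mathfrak{g}$.

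The heart of the matter is a normalizer trick. Since $H_n$ normalizes its identity component, $\mathrm{Ad}(h)\mathfrak{h}_n=\mathfrak{h}_n$ for every $h\in H_n$. Given $g\in G$, by the sequential description of Chabauty convergence recalled in the first exercise there are $h_n\in H_n$ with $h_n\to g$; joint continuity of the action $G\curvearrowright\mathrm{Gr}(d,\mathfrak{g})$ then gives $\mathfrak{h}=\lim\mathfrak{h}_n=\lim\mathrm{Ad}(h_n)\mathfrak{h}_n=\mathrm{Ad}(g)\mathfrak{h}$. Thus $\mathfrak{h}$ is $\mathrm{Ad}(G)$-invariant, hence (as $G$ is connected) an ideal of $\mathfrak{g}$; by simplicity and $\mathfrak{h}\neq\mathfrak{g}$ we get $\mathfrak{h}=0$, i.e. $d=0$. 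So I have reduced to the case that all $H_n$ are discrete. I record the byproduct as a lemma to be reused: a sequence of proper closed subgroups of constant positive dimension cannot converge to $G$.

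It remains to rule out a sequence of discrete subgroups $H_n\to G$. Let $\mu_n$ be the length of a shortest nontrivial element of $H_n$; since $H_n$ meets every neighbourhood of $1$ in the limit, $\mu_n\to0$, so for large $n$ I may write a shortest element as $\gamma_n=\exp X_n$ with $0\neq X_n\to0$, and $\gamma_n$ is non-central because the centre of $G$ is discrete (the centre of $\mathfrak{g}$ is a proper ideal, hence zero). Using the commutator estimate $d(1,[a,b])\le C\,d(1,a)\,d(1,b)$ valid in a fixed small ball $B=B_\varepsilon(1)$, for any $\delta\in H_n\cap B$ one gets $d(1,[\gamma_n,\delta])\le C\mu_n\varepsilon<\mu_n$, which forces $[\gamma_n,\delta]=1$ by minimality; hence $H_n\cap B\subseteq Z_G(\gamma_n)$. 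Now $Z_G(\gamma_n)$ is a proper closed subgroup (as $\gamma_n$ is non-central) of dimension at least $1$ (it contains the one-parameter subgroup $\exp(\mathbb{R}X_n)$), while $H_n\cap B$ becomes arbitrarily dense in a fixed neighbourhood of $1$ (this is exactly what $H_n\to G$ gives, after a routine compactness argument); therefore $Z_G(\gamma_n)\to G$. Passing to a subsequence along which $\dim Z_G(\gamma_n)$ is constant and positive, this contradicts the normalizer lemma, completing the proof.

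The step I expect to be the main obstacle is precisely this reduction to, and treatment of, the discrete case: one cannot argue by dimension counts alone, since in a general locally compact group the dimension of a Chabauty limit may strictly exceed the dimensions of the approximating subgroups (witness $\tfrac1n\mathbb{Z}\to\mathbb{R}$), so the simplicity of $G$ — the absence of proper nonzero ideals, and of normal nilpotent or abelian subgroups — must genuinely be used, here via the normalizer trick together with the "a short element has a large centralizer" argument; an alternative is to invoke the Zassenhaus–Kazhdan–Margulis lemma to confine $\langle H_n\cap\Omega\rangle$ to a proper connected nilpotent subgroup and then conclude in the same way. Beyond this, only routine care is needed with the quantitative $\varepsilon$-density bookkeeping (shrinking balls so that the relevant products stay inside the chosen neighbourhoods) and with the joint continuity of the Grassmannian action.
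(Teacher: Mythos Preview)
Your proof is correct and follows the same two-step scheme as the paper's sketch: first reduce to the discrete case via an ideal argument, then rule out discrete approximants. The paper phrases the first step as ``$H^\circ$ is normal in $G$, hence trivial or all of $G$''; your Grassmannian-limit version (pass to constant dimension, let $\mathfrak{h}_n\to\mathfrak{h}$, and observe that $\mathfrak{h}$ is $\mathrm{Ad}(G)$-invariant because each $\mathfrak{h}_n$ is $\mathrm{Ad}(H_n)$-invariant and $H_n\to G$) is the same idea written sequentially. Where you genuinely diverge is in the discrete step: the paper simply cites Toyama--Kuranishi (or, in the accompanying exercise, the Zassenhaus neighbourhood) to conclude that a non-nilpotent connected Lie group is never a Chabauty limit of discrete subgroups. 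You instead give a self-contained argument: the commutator bound forces $H_n\cap B$ to centralize a shortest element $\gamma_n$, hence to lie in the proper positive-dimensional subgroup $Z_G(\gamma_n)$; since $H_n\cap B$ generates approximations to all of $G$, this yields $Z_G(\gamma_n)\to G$, and you feed these back into your normalizer lemma for the contradiction. This bootstrap is neat and buys independence from the Zassenhaus machinery; the paper's route is shorter but black-boxes exactly the nilpotence step you unpack.
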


The idea is that if $H$ is sufficiently close to $G$ then it has points close to $1$ whose logarithm generate the Lie algebra of $G$. This implies that the connected component of identity $H^\circ$ is normal in $G$, and as $G$ is simple, it is either trivial or everything. Thus, it is enough to show that $H^\circ$ is non-trivial, i.e. that $H$ is not discrete. This is proved in \cite{To,Ku} --- more precisely it is shown there that a non-nilpotent connected Lie group is never a limit of discrete subgroups. 

\begin{exercise} 
Show that $G$ is not a limit of discrete subgroups, relying on the classical:

\begin{thm}[Zassenhaus, see \cite{Rag}]\label{Zass}
A Lie group $G$ admits an identity neighborhood $U$ such that for every discrete group $\Gamma\le G$, $\langle \log(\Gamma\cap U)\rangle$ is a nilpotent Lie algebra.
\end{thm}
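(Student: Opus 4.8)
The plan is to transport the problem to the Lie algebra $\fg$ via the exponential map and to exploit that, near the identity, the group commutator is a \emph{contraction}. First I would fix a norm $\|\cdot\|$ on $\fg$, rescaled so that $\|[X,Y]\|\le\|X\|\,\|Y\|$, and choose a ball $B_\delta=\{X:\|X\|<\delta\}$ on which $\exp$ is a diffeomorphism onto its image and on which the Baker--Campbell--Hausdorff (BCH) series converges. Writing the group commutator as $[\exp X,\exp Y]=\exp\phi(X,Y)$, BCH gives $\phi(X,Y)=[X,Y]+(\text{higher order terms})$, and in particular $\|\phi(X,Y)\|\le 2\|X\|\,\|Y\|$ on a small enough ball. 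Hence, shrinking $\delta$ so that $\delta\le\tfrac14$, I obtain the contraction estimate $\|\phi(X,Y)\|\le\tfrac12\|Y\|$ for all $X,Y\in B_\delta$ (and symmetrically $\le\tfrac12\|X\|$, since $[\exp X,\exp Y]^{-1}=[\exp Y,\exp X]$). I then \emph{define} the Zassenhaus neighborhood to be $U=\exp(B_\delta)$.

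Next, fix a discrete subgroup $\Gamma\le G$ and set $F=\log(\Gamma\cap U)\subset B_\delta$. Since $\Gamma$ is discrete and $\exp(\overline{B_\delta})$ is compact, $\Gamma\cap\exp(\overline{B_\delta})$ is finite, so $F$ is finite; let $m>0$ be the smallest norm of a nonzero element of $F$ (the case $F=\{0\}$ being trivial). The estimate shows $F$ is stable under $\phi$: for $X,Y\in F$ the commutator $[\exp X,\exp Y]$ lies in $\Gamma$ and has $\|\phi(X,Y)\|\le\tfrac12\|Y\|<\delta$, so $\phi(X,Y)\in F$ again. Iterating into the second slot, a $k$-fold nested commutator $[\exp X_1,[\exp X_2,[\cdots,\exp X_k]\cdots]]$ of elements of $\Gamma\cap U$ has logarithm of norm at most $(\tfrac12)^{k-1}\delta$. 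Once $k$ is large enough that $(\tfrac12)^{k-1}\delta<m$, such a commutator lies in $\Gamma\cap U$ yet has norm below the minimal positive norm $m$, forcing it to equal the identity. Thus all sufficiently long nested commutators vanish, and $\Gamma_U:=\la\Gamma\cap U\ra$ is a discrete nilpotent group whose class is bounded independently of $\Gamma$.

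It then remains to upgrade this group statement to the asserted Lie-algebra statement. The bridge is once more BCH: since $\phi(X,Y)=[X,Y]+\psi(X,Y)$ with $\psi$ of strictly higher order, the vanishing of long nested group commutators --- combined with discreteness, which turns ``arbitrarily small'' into ``exactly zero'' --- forces the corresponding long iterated Lie brackets of elements of $F$ to vanish. Equivalently, one shows that $\Gamma_U$ is contained in a connected nilpotent subgroup $N$ with (nilpotent) Lie algebra $\mathfrak n$; then $\log(\Gamma\cap U)\subset\mathfrak n$, so $\la\log(\Gamma\cap U)\ra$ is a Lie subalgebra of the nilpotent $\mathfrak n$ and is therefore itself nilpotent.

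I expect the main obstacle to be exactly this last transfer. The contraction is a statement about the group commutator $\phi$, which equals the Lie bracket only to leading order, so the higher-order BCH terms $\psi$ must be controlled uniformly in order not to destroy the geometric contraction of the norms, and discreteness must be invoked at the right moment to convert the smallness of long brackets into genuine vanishing. The conceptually essential point throughout is uniformity: $U$ must be chosen once and for all, with the bound on the nilpotency class independent of the discrete group $\Gamma$, and it is this uniformity that gives the theorem its force.
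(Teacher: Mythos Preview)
The paper does not prove Theorem~\ref{Zass}; it is quoted as a classical result (with a reference to Raghunathan) and used as a black box in the surrounding exercises. So there is no proof in the text to compare your proposal against, and what follows is an assessment of your sketch on its own merits.

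Your contraction argument via BCH is the standard one and correctly yields that the \emph{group} $\Gamma_U=\la\Gamma\cap U\ra$ is nilpotent. One small overclaim: the nilpotency class you extract from ``$(\tfrac12)^{k-1}\delta<m$'' depends on the minimal positive norm $m$, hence on $\Gamma$; your argument as written does not give a class bounded independently of $\Gamma$ (the theorem as stated does not require this, so it is harmless here).

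The genuine gap is exactly the one you flag, and neither of your two bridges closes it as written. For route~(a), the vanishing of a nested \emph{group} commutator gives $\phi(X_1,\phi(X_2,\ldots))=0$, which via BCH only yields
\[
[X_1,[X_2,\ldots,X_k]\ldots]=-(\text{higher-order BCH terms}),
\]
not $=0$; and you cannot rescale $X_i\mapsto tX_i$ to kill the higher terms, because $\exp(tX_i)\notin\Gamma$ for generic $t$, so discreteness no longer bites. For route~(b), the assertion that the discrete nilpotent group $\Gamma_U$ lies in a \emph{connected} nilpotent Lie subgroup is essentially the conclusion you are after and needs its own argument; it is false for arbitrary discrete nilpotent subgroups of Lie groups (e.g.\ a Klein four-group in $\SO(3)$), so the smallness of the generators must be used again. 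One way that works is to pass to the adjoint representation: $\Ad(\Gamma_U)\le\GL(\fg)$ is nilpotent, hence its Zariski closure $A$ is a nilpotent algebraic group; using that each $\Ad(\exp X)=\exp(\mathrm{ad}\,X)$ is close to the identity one shows $\mathrm{ad}\,X\in\Lie(A)$ for every $X\in F$, whence $\la\mathrm{ad}\,F\ra\subset\Lie(A)$ is nilpotent, and the statement for $\la F\ra$ follows (the center of $\fg$ causing no trouble). Plan to spend most of your effort on making this step precise.
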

\end{exercise}

For more details about Proposition \ref{prop:G-isolated} as well as other results in this spirit, see \cite[Section 2]{7}.

\begin{exam}
The circle group $S^1$ is not isolated in its Chabauty space $\text{Sub}_{S^1}$.  Indeed, one can approximate $S^1$ by finite cyclic subgroups. 
\end{exam}

More generally,

\begin{exercise}
Show that if $G$ surjects on $S^1$, then $G$ is not isolated in $\text{Sub}_G$.
\end{exercise}

In fact, using Theorem \ref{Zass} one can show:

\begin{prop}
A connected Lie group is isolated in $\text{Sub}_G$ iff it does not surject on $S^1$ (i.e. has no non-trivial characters). 
\end{prop}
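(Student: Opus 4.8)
The plan is to prove the two directions of the equivalence separately. For connected $G$ a continuous homomorphism $G\to S^1$ has connected — hence trivial or surjective — image, so ``$G$ has a non-trivial character'' and ``$G$ surjects onto $S^1$'' are the same condition, and one direction is then exactly the exercise preceding the statement. It remains to show: if $G$ has no non-trivial character, then $G$ is isolated in $\text{Sub}_G$. Since characters of $G$ factor through its maximal Hausdorff abelian quotient $G/\overline{[G,G]}$, a connected abelian Lie group — which has no non-trivial character precisely when it is trivial — the hypothesis says $\overline{[G,G]}=G$. I record the single consequence I shall use: this forces $G$ to be non-nilpotent (or trivial), because a non-trivial connected nilpotent Lie group has non-trivial topological abelianization; and since a character of $G/N$ pulls back to $G$, the hypothesis descends to every quotient of $G$ by a closed normal subgroup.

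Assume, for contradiction, that there is a sequence $H_n\to G$ in $\text{Sub}_G$ with $H_n\ne G$ for all $n$ (and $G\ne\{1\}$). As $G$ is connected non-nilpotent it is not a limit of discrete subgroups \cite{To,Ku} (the application of Theorem \ref{Zass} discussed above), so $H_n$ is non-discrete for large $n$; being a closed subgroup of a Lie group, $H_n$ is itself a Lie group, its identity component $H_n^\circ$ is open in it, and $\mathfrak h_n:=\Lie(H_n^\circ)\ne 0$. Since $\text{Sub}_G$ and the Grassmannians of $\mathfrak g$ are compact, after passing to a subsequence I may assume $\dim\mathfrak h_n$ equals a constant $d\ge 1$ and $\mathfrak h_n\to\mathfrak h_\infty$ for some $d$-dimensional subspace $\mathfrak h_\infty\subseteq\mathfrak g$. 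Continuity of the bracket makes $\mathfrak h_\infty$ a subalgebra; and it is an \emph{ideal}: for $X\in\mathfrak g$ choose $g_n\in H_n$ with $g_n\to\exp X$; then $\Ad(g_n)\mathfrak h_n=\mathfrak h_n$ (as $H_n^\circ\trianglelefteq H_n$) passes in the limit to $\Ad(\exp X)\mathfrak h_\infty=\mathfrak h_\infty$ for every $X$, and differentiating gives $[\mathfrak g,\mathfrak h_\infty]\subseteq\mathfrak h_\infty$.

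If $\mathfrak h_\infty=\mathfrak g$ we are done, for then $\dim\mathfrak h_n\to\dim\mathfrak g$, so $H_n^\circ$ is open in $G$, hence $H_n^\circ=G$ and $H_n=G$ for large $n$ — contradiction. Thus $\mathfrak h_\infty$ is a \emph{proper} non-zero ideal, so $\mathfrak g$ is not simple; in particular this case is vacuous for simple $G$, which recovers Proposition \ref{prop:G-isolated}. The remaining case I handle by induction on $\dim G$: let $N$ be the closed connected normal subgroup which is the closure of the connected subgroup with Lie algebra $\mathfrak h_\infty$ (normal because $\mathfrak h_\infty$ is an ideal, non-trivial because $\mathfrak h_\infty\ne 0$), and let $\pi\colon G\to G/N$ be the quotient. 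Then $G/N$ is connected with no non-trivial character and $\dim(G/N)<\dim G$, so by induction it is isolated in $\text{Sub}_{G/N}$; on the other hand, lifting $q\in G/N$ to $g\in G$ and approximating $g$ by $g_n\in H_n$ shows $\overline{\pi(H_n)}\to G/N$, hence $\overline{\pi(H_n)}=G/N$ for large $n$. Combining this ``density modulo $N$'' with the fact that $H_n^\circ$ already exhausts (in the limit) the group generated by $\exp\mathfrak h_\infty\subseteq N$, and using that $H_n$ is closed, one forces $H_n=G$, the desired contradiction.

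I expect this last combination to be the genuine obstacle. Connected normal subgroups — e.g. the one with Lie algebra $\mathfrak h_\infty$ — need not be closed, $G$ may have non-trivial centre, and the projections of $H_n$ together with the (possibly infinite) component group $H_n/H_n^\circ$ require careful bookkeeping; turning ``$\overline{\pi(H_n)}=G/N$ and $\mathfrak h_n\to\mathfrak h_\infty$'' into ``$H_n=G$'' is where the real work sits. These points are treated in \cite[Section 2]{7}, to which a complete proof should defer; the argument above is the skeleton, the two uniform steps (non-nilpotence rules out discrete limits; the limiting Lie algebra is an ideal) being its core, with the induction reducing everything to the simple case already established.
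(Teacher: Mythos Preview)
The paper itself does not give a proof of this proposition: it only says ``using Theorem \ref{Zass} one can show'' and, for the simple case and ``other results in this spirit'', points to \cite[Section~2]{7}. So there is no detailed argument in the paper to compare yours against; the only indication is that Zassenhaus' theorem is the intended tool.

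On that score your skeleton matches the paper's hint. The two substantive steps you isolate --- (i) a topologically perfect connected Lie group is non-nilpotent, hence by Zassenhaus (equivalently \cite{To,Ku}) not a Chabauty limit of discrete subgroups, and (ii) the limiting Lie algebra $\mathfrak h_\infty$ of the identity components is an ideal because $\Ad(H_n)\mathfrak h_n=\mathfrak h_n$ passes to the limit --- are exactly the sort of reductions one expects, and the second one is a clean way to reach the simple case of Proposition~\ref{prop:G-isolated}.

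You are also right to flag the inductive closing as the real gap, and it is worth being explicit about \emph{why} it does not close as written. From $\overline{\pi(H_n)}=G/N$ you get only that $H_nN$ is dense in $G$, not that $H_n\supseteq N$; and $\mathfrak h_n\to\mathfrak h_\infty\subseteq\mathfrak n$ does \emph{not} force $\mathfrak h_n\subseteq\mathfrak n$ or even $\mathfrak h_n\cap\mathfrak n\ne 0$ (a line of slope $1/n$ in $\BR^2$ converges to the $x$-axis while meeting it only at $0$). So ``density modulo $N$'' and ``$H_n^\circ$ close to a piece of $N$'' are two soft statements that do not algebraically combine to $H_n=G$. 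One genuinely needs the finer analysis carried out in \cite[Section~2]{7} --- controlling $H_n/H_n^\circ$ via Zassenhaus in the quotient, or tracking how the component group interacts with the normal subgroup --- and your deferral to that reference is the correct move. As an outline your write-up is sound; as a proof it is, as you yourself say, a skeleton.
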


Note that a connected Lie group $G$ does not surject on the circle iff its commutator $G'$ is dense in $G$. Such groups are called {\it topologically perfect}.

\begin{exercise}
Let $G$ be a Lie group.
Deduce from Theorem \ref{Zass} that if $H\in \text{Sub}_G$ is a limit of discrete groups, then the identity connected component $H^\circ$ of $H$ is nilpotent.
\end{exercise}

A similar result can be proved for semisimple groups over non-Archimedean local fields:

\begin{exercise}
Consider $G=\SL_n(\BQ_p)$ and show that $G$ is an isolated point in $\text{Sub}_G$.
\end{exercise}

\noindent
{\it Hint:} Use the following facts:
\begin{itemize}
\item $\SL_n(\BZ_p)$ is a maximal subgroup of $\SL_n(\BQ_p)$.
\item The Frattini subgroup of $\SL_n(\BZ_p)$ is open, i.e. of finite index.
\end{itemize}

The following result is proved in \cite{GL}.

\begin{thm}
Let $G$ be a semisimple analytic group over a local field $k$, then $G$ is isolated in $\text{Sub}_G$.
\end{thm}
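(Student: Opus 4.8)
The plan is to isolate one mechanism that will cover both the Archimedean and the non-Archimedean case. Namely, it suffices to exhibit an \emph{open} subgroup $U\le G$ which is itself isolated in $\text{Sub}_U$ and such that $G=\langle U\cup F\rangle$ for some \emph{finite} set $F\subseteq G$. Indeed, suppose $H_n\to G$ in $\text{Sub}_G$. Since $U$ is open and closed, the sequential description of Chabauty convergence gives $H_n\cap U\to U$ in $\text{Sub}_U$: every $u\in U$ is a limit of elements $h_n\in H_n$, and these eventually lie in the open set $U$; and any convergent sequence drawn from the $H_{n_k}\cap U$ has limit in the closed set $U$. As $U$ is isolated in $\text{Sub}_U$, we get $U\le H_n$ for all large $n$. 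Then for each $f\in F$ the elements of $H_n$ converging to $f$ eventually lie in the open neighbourhood $fU$ of $f$, hence $f\in H_nU=H_n$; therefore $G=\langle U\cup F\rangle\subseteq H_n$, so $H_n=G$. (The requirement $G=\langle U\cup F\rangle$ with $F$ finite is automatic once $U$ is open and $G$ is compactly generated, by covering a compact generating set with finitely many $U$-cosets.)

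If $k=\BR$ or $\BC$, take $U=G^\circ$, the identity component. It is open, $G/G^\circ$ is finite (so $G$ is compactly generated), and $G^\circ$ is a connected semisimple Lie group, hence topologically perfect; by the criterion for connected Lie groups recalled above — and, underlying it, Proposition~\ref{prop:G-isolated} and the Zassenhaus Theorem~\ref{Zass} — the group $G^\circ$ is isolated in $\text{Sub}_{G^\circ}$. Feeding this into the mechanism settles the Archimedean case.

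If $k$ is non-Archimedean, take $U$ to be a compact open subgroup of $G$ — for instance the stabiliser of a vertex of the Bruhat--Tits building of $G$; here $G$ is compactly generated because it acts cocompactly on that building with compact open cell-stabilisers. So the only thing to check is that $U$ is isolated in $\text{Sub}_U$, and for this I claim it is enough that the Frattini subgroup $\Phi(U)$ be \emph{open} in $U$. Granting that, let $L_n\to U$ in $\text{Sub}_U$. Since $U/\Phi(U)$ is finite and each coset of $\Phi(U)$ is open, for large $n$ the subgroup $L_n$ meets every coset of $\Phi(U)$, i.e.\ $L_n\Phi(U)=U$. But any proper closed subgroup of a profinite group lies in some maximal open subgroup $M$, and $\Phi(U)\subseteq M$ for every such $M$; so if $L_n\ne U$ we would get $L_n\Phi(U)\subseteq M\subsetneq U$, contradicting $L_n\Phi(U)=U$. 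Hence $L_n=U$ for large $n$.

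The one genuinely non-formal ingredient is therefore that a compact open subgroup $U$ of a semisimple group over a non-Archimedean local field has open Frattini subgroup, equivalently that $U/\Phi(U)$ is finite. In residue characteristic $0$ this is the standard fact that compact $p$-adic analytic groups are virtually topologically finitely generated pro-$p$ (with only finitely many maximal open subgroups), so that $\Phi(U)$ has finite index. In positive characteristic one argues along the congruence filtration $U=K_0\supseteq K_1\supseteq\cdots$: the layers $K_i/K_{i+1}$ are copies of $\fg\otimes\BF_q$ over the residue field $\BF_q$, which is a perfect Lie algebra and an irreducible module over the residue-field points, so $\overline{[K_1,K_1]}$ already equals a congruence subgroup and every maximal open subgroup of $U$ contains it — whence $\Phi(U)$ is open. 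I expect this to be the main obstacle, especially in making the argument uniform over the finitely many small characteristics in which $\fg\otimes\BF_q$ may fail to be perfect or irreducible; there one falls back on the explicit generation of parahoric subgroups by finitely many affine root groups together with a bounded part of a maximal torus. Everything else is the formal mechanism of the first paragraph.
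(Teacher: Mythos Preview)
The paper does not actually prove this theorem; it records the statement and defers to \cite{GL}. What the paper \emph{does} contain are the two special cases your argument is visibly modeled on: Proposition~\ref{prop:G-isolated} for connected simple real Lie groups (via Zassenhaus/Toyama--Kuranishi) and the exercise on $\SL_n(\BQ_p)$ (via the open Frattini subgroup of $\SL_n(\BZ_p)$). Your reduction mechanism --- find an open subgroup $U$ isolated in $\text{Sub}_U$, then use compact generation of $G$ to pull finitely many coset representatives into $H_n$ --- is correct and cleanly unifies those two hints. The continuity of $H\mapsto H\cap U$ for clopen $U$, and the Frattini argument showing that a profinite $U$ with open $\Phi(U)$ is isolated in $\text{Sub}_U$, are both valid as written. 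So in characteristic zero your proof is complete and matches the spirit of the paper's hints.

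The genuine gap is exactly where you locate it: the claim that a compact open subgroup of $G(k)$ has open Frattini subgroup when $\mathrm{char}(k)=p>0$. Your layer argument via perfectness and irreducibility of $\fg\otimes\BF_q$ is the right heuristic in large characteristic but really does fail for small $p$ (already $\mathfrak{sl}_p$ in characteristic $p$ has nontrivial center, and the first congruence quotient can acquire extra maximal subgroups). The fallback you sketch --- generation of a parahoric by finitely many affine root groups plus a bounded torus --- points in the right direction, but it is not yet a proof that $\Phi(U)$ is open; rather it suggests bypassing the Frattini subgroup entirely and arguing directly that any closed subgroup of $G$ containing an open neighbourhood of each of finitely many carefully chosen elements must equal $G$. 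That is closer to how the reference \cite{GL} proceeds. In summary: your proposal is correct over archimedean and $p$-adic fields and correctly isolates the remaining obstacle, but the positive-characteristic step as written is a sketch, not a proof.
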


\section{Invariant measures on $\text{Sub}_G$}

The group $G$ acts on $\text{Sub}_G$ by conjugation and it is natural to consider the invariant measures on this compact $G$-space. 

\begin{defn}
An Invariant Random Subgroup (hereafter IRS) is a Borel regular probability measure on $\text{Sub}_G$ which is invariant under conjugations.
\end{defn}

\subsection{First examples and remarks:}\label{subsection:examples}

\medskip
\noindent 
$(1)$ The Dirac measures correspond to normal subgroups. In view of this, one can regard IRS's as a generalization of normal subgroups.

\medskip
\noindent
$(2)$ Let $\gC\le G$ be a lattice (or more generally a closed subgroup of finite co-volume). The map $G\to \gC^G\subset\text{Sub}_G,~g\mapsto g\gC g^{-1}$, factors through $G/\gC$. Hence we may push the invariant probability measure on $G/\gC$ to a conjugation invariant probability measure on $\text{Sub}_G$ supported on (the closure of) the conjugacy class of $\Gamma$.  In view of that  IRS's also generalize `lattices' or more precisely finite volume homogeneous spaces $G/\gC$ --- as conjugated lattices give rise to the same IRS.
We shall denote the IRS associated with (the conjugacy class of) $\gC$ by $\mu_\gC$.

\medskip

For instance let $\Sigma$ be a closed hyperbolic surface and normalize its Riemannian measure. Every unit tangent vector yields an embedding of $\pi_1(\Sigma)$ in $PSL_2(\RR)$. Thus the probability measure on the unit tangent bundle corresponds to an IRS of type (2) above.  

\begin{center}
\includegraphics[height=4cm]{./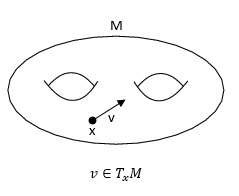}
\end{center} 


\medskip
\noindent
$(3)$ Let again $\gC\le G$ be a lattice in $G$, and let $N\lhd\gC$ be a normal subgroup of $\gC$. As in $(2)$ the $G$-invariant probability measure on $G/\gC$ can be used to choose a random conjugate of $N$ in $G$. This is an IRS supported on the (closure of the) conjugacy class of $N$. More generally, every IRS on $\gC$ can be induced to an IRS on $G$. Intuitively, the random subgroup is obtained by conjugating $\gC$ by a random element from $G/\gC$ and then picking a random  subgroup in the corresponding conjugate of $\gC$. One can express the induction of $\mu\in\irs(\gC)$ to $\irs(G)$ by:

$$
 \text{Ind}_\gC^G(\mu)=\frac{1}{m(\gO)}\int_{\gO}(i_g)_*\mu~dm(g),
$$
where $m$ is Haar measure on $G$, $\gO$ is a fundamental domain for $G/\gC$ and $i_g:\sub_G\to\sub_G$ corresponds to conjugation by $g$ .

\subsection{Connection with pmp actions}
Let $G\act (X,m)$ be a probability measure preserving action. By a result of Varadarajan, the stabilizer of almost every point in $X$ is closed in $G$. Moreover, the stabilizer map $X\to \text{Sub}_G,~x\mapsto G_x$ 
is measurable, and hence one can push the measure $m$ to an IRS on $G$. In other words the random subgroup is the stabilizer of a random point in $X$.

This reflects the connection between invariant random subgroups and pmp actions. Moreover, the study of pmp $G$-spaces can be divided to 
\begin{itemize}
\item the study of stabilizers (i.e. IRS), 
\item the study of orbit spaces
\end{itemize}
and the interplay between the two. 

\medskip

The connection between IRS and pmp actions goes also in the other direction:

\begin{thm}\label{thm:satbilizers}
Let $G$ be a locally compact group and $\mu$ an IRS in $G$. Then there is a probability space $(X,m)$ and a measure preserving action $G\act X$ such that $\mu$ is the push-forward of the stabilizer map $X\to\text{Sub}_G$.
\end{thm}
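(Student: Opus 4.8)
The plan is to realize the desired pmp action as a Poisson-type or "tautological" construction on a space built directly out of the IRS $\mu$. The most economical route is the following. Consider the space $\mathcal{Z}$ of \emph{based} or \emph{decorated} subgroups, namely $\mathcal{Z} = \{(H, gH) : H \in \text{Sub}_G,\ g \in G\}$, or more precisely the space $G \times \text{Sub}_G$ with $G$ acting diagonally by $h\cdot(g,H) = (hg, hHh^{-1})$. This does not immediately give a \emph{probability} measure-preserving action because $G$ is not compact and has no invariant probability measure on itself. So instead I would first reduce to the case where $G$ is discrete or second countable, and then use the coset space: form $X := \{(H, x) : H \in \text{Sub}_G,\ x \in G/H\}$, which fibers over $\text{Sub}_G$ with fiber over $H$ equal to the coset space $G/H$, and carries a natural $G$-action $g\cdot(H,x) = (gHg^{-1}, gx)$. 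The stabilizer of $(H, eH)$ is exactly $H$. The task is then to put a $G$-invariant probability measure on $X$ projecting to $\mu$ on $\text{Sub}_G$, whose disintegration over a $\mu$-typical $H$ is a probability measure on $G/H$.

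The key step is the construction of the fiberwise measures. Over each $H$ in the support of $\mu$, the homogeneous space $G/H$ carries a natural $G$-quasi-invariant measure (the one coming from Haar measure and a choice of rho-function / quasi-invariant measure on coset spaces, since $H$ is closed); but it need not be finite, and it need not be canonically normalized in a conjugation-equivariant way. To get around this I would instead build $X$ not from $G/H$ but from a fixed auxiliary pmp action that "absorbs" the non-compactness. Concretely: let $(B, \beta)$ be a standard probability space on which $G$ acts in a measure-preserving way with the property that the action is \emph{essentially free with a fundamental domain structure} — for instance take a Poisson suspension, or more simply use that any locally compact second countable group admits an essentially free pmp action (e.g. a Bernoulli action $G \curvearrowright (Y^G, \nu^{\otimes G})$ when $G$ is discrete, and an appropriate analogue via a continuous model otherwise). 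Then set $X := \text{Sub}_G \times B$ with the diagonal action $g\cdot(H,b) = (gHg^{-1}, gb)$ and measure $m := \mu \times \beta$. This is manifestly $G$-invariant (product of two invariant measures, under a diagonal action — one checks invariance by invariance of each factor) and projects to $\mu$. The remaining point is that the stabilizer of a $\mu \times \beta$-typical point $(H,b)$ is $H \cap G_b = H$, using essential freeness of $G \curvearrowright B$.

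The main obstacle, and where real care is needed, is precisely this essential-freeness input: for a general locally compact second countable group $G$ one must exhibit \emph{some} essentially free pmp action, and then argue measurably that the stabilizer map $(H,b) \mapsto H \cap G_b$ agrees $\mu\times\beta$-almost everywhere with $(H,b)\mapsto H$. The existence of an essentially free pmp action is standard for countable groups (Bernoulli) and for general lcsc groups follows by a Gaussian/Poisson construction, but it should be cited rather than reproved. One then needs: (i) the stabilizer map $X \to \text{Sub}_G$ is measurable — this is Varadarajan's theorem quoted above, applied to the pmp action $G \curvearrowright (X,m)$; (ii) its pushforward is $\mu$ — immediate from the product structure together with $\beta(\{b : G_b = \{e\}\}) = 1$. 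I would organize the write-up as: reduce to $G$ lcsc; recall/cite existence of an essentially free pmp action $(B,\beta)$; form $X = \text{Sub}_G \times B$ with diagonal action and product measure; verify invariance; verify via Varadarajan and essential freeness that the stabilizer of a.e. point is the first coordinate, hence the pushforward of the stabilizer map is $\mu$.
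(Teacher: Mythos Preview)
Your proposal contains a genuine error in the stabilizer computation for the product action. With the diagonal action $g\cdot(H,b)=(gHg^{-1},gb)$ on $\text{Sub}_G\times B$, the stabilizer of $(H,b)$ is $N_G(H)\cap G_b$, not $H\cap G_b$: the condition $gHg^{-1}=H$ says $g\in N_G(H)$, not $g\in H$. Since you took $G\curvearrowright(B,\beta)$ to be essentially free, $G_b=\{1\}$ for $\beta$-a.e.\ $b$, and hence the stabilizer of a typical $(H,b)$ is $N_G(H)\cap\{1\}=\{1\}$. The pushforward of the stabilizer map is therefore $\delta_{\{1\}}$, not $\mu$. In short, multiplying by an essentially free action kills all stabilizers rather than correcting them.

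This is exactly the obstacle the paper flags: acting on $\text{Sub}_G$ alone gives stabilizer $N_G(H)$, and one must pass to a larger space to bring it down to $H$. Your \emph{first} idea, the coset bundle $\{(H,x):x\in G/H\}$, does have the right stabilizers (as you correctly computed, the stabilizer of $(H,eH)$ is $H$), and this is essentially the space $\text{Cos}_G$ that the paper uses. The genuine difficulty, which you identified but then sidestepped incorrectly, is that the fibers $G/H$ typically carry only an infinite invariant measure. The paper's fix is not to take a product with a free action but to replace each (infinite-measure) fiber of $\text{Cos}_G\times\BR$ by the Poisson point process on it; this converts a $\sigma$-finite invariant measure into an invariant probability measure while preserving the stabilizer, since the stabilizer of a generic Poisson configuration on $G/H$ is $H$. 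Your write-up should return to the coset-bundle picture and carry out this Poisson step (or an equivalent one) rather than the $\text{Sub}_G\times B$ construction.
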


This was proved in \cite{AGV} for discrete groups and in \cite[Theorem 2.4]{7} for general $G$. The first thing that comes to mind is to take the given $G$ action on $(\text{Sub}_G,\mu)$, but then the stabilizer of a point $H\in\text{Sub}_G$ is $N_G(H)$ rather than $H$. To correct this one can consider the larger space $\text{Cos}_G$ of all cosets of all closed subgroups, as a measurable $G$-bundle over $\text{Sub}_G$. Defining an appropriate invariant measure on $\text{Cos}_G\times\BR$ and replacing each fiber by a Poisson process on it, gives the desired probability space.

\subsection{Topology}
We shall denote by $\text{IRS}(G)$ the space $G$-invariant probability measures on $\sub(G)$
$$
 \text{IRS}(G):=\text{Prob}(\text{Sub}_G)^G
$$
equipped with the $w^*$-topology. By Alaoglu's theorem $\text{IRS}(G)$ is compact.

\subsection{Existence}

An interesting yet open question is whether this space is always non-trivial.

\begin{ques}\label{ques:existance}
Does every non-discrete locally compact group admit a non-trivial IRS?
\end{ques}

A counterexample, if exists, should in particular be a simple group without lattices. Currently the only known such example is the Neretin group and some close relatives (see \cite{BCGM}).
The question whether the Neretin group admits non-trivial IRS has two natural sub-questions:

\begin{ques}
\begin{enumerate}
\item Does the Neretin group admit a (non-discrete) closed subgroup of finite co-volume?
\item Does the Neretin group admit a non-trivial discrete IRS, i.e. an IRS with respect to which a random subgroup is a.s. discrete? 
\end{enumerate}
\end{ques}

\begin{rem}
There are many discrete groups without nontrivial IRS, for instance $\PSL_n(\BQ)$, and also the Tarski Monsters.
\end{rem}

\subsection{Soficity of IRS}

\begin{definition}
Let us say that an IRS $\mu$ is {\it co-sofic} if it is a weak-$*$ limit in $\text{IRS}(G)$ of ones supported on lattices.
\end{definition}

The following question can be asked for any locally compact group $G$, however I find the $3$ special cases of $G=$\blue{$SL_2(\RR),SL_2(\QQ_p)$} and \blue{$\text{Aut}(T)$} particularly intriguing:

\begin{ques}
Is every IRS in $G$ co-sofic?  
\end{ques}

\begin{exercise}
1. Show that the case $G=F_n$, the discrete rank $n$ free group, is equivalent to the Aldous--Lyons conjecture that every unimodular network (supported on rank $n$ Schreier graphs) is a limit of ones corresponding to finite Schreier graphs \cite{AL}.\\

2. A Dirac mass $\delta_N,~N\lhd F_n$ is co-sofic iff the corresponding group $G=F_n/N$ is sofic.
\end{exercise}

\section{IRS and lattices}
Viewing IRS as a generalization of lattices there are two directions toward which one is urged to go:
\begin{enumerate}
\item Extend classical theorems about lattices to general IRS.
\item Use the compact space IRS$(G)$ in order to study its special `lattice' points.
\end{enumerate}

Remarkably, the approach $(2)$ turns out to be quite fruitful in the theory of asymptotic properties of lattices. We shall see later on (see Section \ref{section:HR}) an example of how rigidity properties of $G$-actions yield interesting data of the geometric structure of locally symmetric spaces $\gC\backslash G/K$ when the volume tends to infinity. This approach is also useful for proving uniform statements regurding the set of all lattices (see Section \ref{sec:KM}). 


The following sections will demonstrate in various forms both approaches (1) and (2) and some interactions between the two (for instance we will use the extension of Borel's density theorem for IRS in order to prove a strong version of the Kazhdan--Margulis theorem about lattices).

\subsection{The IRS compactification of moduli spaces}
One direction in the spirit of $(2)$ which hasn't been applied yet (to the author's knowledge) is simply to obtain, using $\irs(G)$, new compactifications of certain natural spaces. 

\begin{exam}
Let $\gS$ be a closed surface of genus $\ge 2$. As we have seen in \ref{subsection:examples}(2), every hyperbolic structure on $\gS$ corresponds to an IRS in $\PSL_2(\BR)$. Taking the closure in IRS$(G)$ of the set of hyperbolic structures on $\gC$, one obtains an interesting compactification of the moduli space of $\gS$.  

\begin{prob}
Analyse the IRS compactification of $\text{Mod}(\gS)$.
\end{prob} 

Note that the resulting compactification is similar to (but is not exactly) the Deligne--Munford compactification.

\end{exam}


\subsection{Borel density theorem}

\begin{thm}[Borel density theorem for IRS, \cite{7}]\label{thm:BDT}
Let $G$ be a connected non-compact simple (center-free) Lie group. Let $\mu$ be an IRS on $G$ without atoms. Then a random subgroup is $\mu$-a.s. discrete and Zariski dense.  
 \end{thm}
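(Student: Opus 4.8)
The plan is to reduce both assertions to Furstenberg's lemma on projective spaces, using the hypothesis of atomlessness twice. Recall Furstenberg's lemma: for a connected semisimple Lie group $G$ without compact factors --- in particular our connected simple non-compact $G$, which, having real rank $\ge 1$, is generated by its unipotent one-parameter subgroups --- any $G$-invariant Borel probability measure on $\mathbb{P}(V)$, for a finite-dimensional real representation $V$, is supported on the set of $G$-fixed points. Feeding in the adjoint representation on $\Lambda^{k}\mathfrak{g}$ together with the Plücker embedding $\mathrm{Gr}_{k}(\mathfrak{g})\hookrightarrow\mathbb{P}(\Lambda^{k}\mathfrak{g})$, we obtain the consequence I will actually use: every $\mathrm{Ad}(G)$-invariant probability measure on the Grassmannian $\mathrm{Gr}(\mathfrak{g})$ is supported on $\mathrm{Ad}(G)$-invariant subspaces, i.e.\ on ideals of $\mathfrak{g}$; since $\mathfrak{g}$ is simple these are only $0$ and $\mathfrak{g}$.

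For discreteness, consider the map $\mathrm{Sub}_G\to\mathrm{Gr}(\mathfrak{g})$, $H\mapsto\mathrm{Lie}(H^{\circ})$. It is Borel and intertwines conjugation on $\mathrm{Sub}_G$ with the adjoint action, so the push-forward of $\mu$ is an $\mathrm{Ad}(G)$-invariant probability measure on $\mathrm{Gr}(\mathfrak{g})$; by the consequence above it is supported on $\{0,\mathfrak{g}\}$, and since $\mathrm{Lie}(H^{\circ})$ is conjugation-invariant it is $\mu$-almost surely one fixed value. If that value were $\mathfrak{g}$, then $H^{\circ}=G$, hence $H=G$, on a set of positive measure, so $\mu(\{G\})>0$ --- an atom, contradicting the hypothesis. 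Hence $\mathrm{Lie}(H^{\circ})=0$, i.e.\ $H$ is discrete, $\mu$-a.s.

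For Zariski density, let $A(H):=(\overline{H}^{\,Z})^{\circ}$ be the identity component of the Zariski closure of $H$; it is a connected algebraic subgroup, normalized by $H$, and $H\mapsto A(H)$ is again Borel and conjugation-equivariant. Running the same argument on $H\mapsto\mathrm{Lie}(A(H))\in\mathrm{Gr}(\mathfrak{g})$ gives $\mathrm{Lie}(A(H))\in\{0,\mathfrak{g}\}$ $\mu$-a.s. On the event $\mathrm{Lie}(A(H))=\mathfrak{g}$ we have $A(H)=\mathbf{G}$, so $H$ is Zariski dense; on the event $\mathrm{Lie}(A(H))=0$ the Zariski closure $\overline{H}^{\,Z}$ is finite, hence $H$ is finite. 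So it remains to show that $\mu$ assigns no mass to finite subgroups. Passing to the ergodic decomposition, suppose $\mu$ is ergodic and supported on finite subgroups; their common order $n$ is conjugation-invariant, and by atomlessness $\mu(\{\{e\}\})=0$, so $n\ge 2$. A Lie group has only finitely many conjugacy classes of finite subgroups of a given order, so ergodicity forces $\mu$ to be supported on a single conjugacy class, i.e.\ on $G/N$ with $N=N_G(H_0)$ and $|H_0|=n$; since $\mu$ is a probability measure, $N$ has finite covolume in $G$. By the classical Borel density theorem for finite-covolume subgroups, $N$ is Zariski dense in $\mathbf{G}$; then $Z_N(H_0)$, of finite index in $N$, is Zariski dense too, so $Z_{\mathbf{G}}(H_0)=\mathbf{G}$, i.e.\ $H_0$ is central in $G$. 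As $G$ is center-free, $H_0=\{e\}$, contradicting $n\ge 2$. Hence $\mu$-a.s.\ $H$ is Zariski dense, and by the previous paragraph discrete.

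Two routine points need attention: the Borel measurability of $H\mapsto H^{\circ}$ and $H\mapsto(\overline{H}^{\,Z})^{\circ}$, together with that of passing to Lie algebras, as maps into $\mathrm{Gr}(\mathfrak{g})$; and a careful statement of Furstenberg's lemma (via the convergence of unipotent orbits in $\mathbb{P}(V)$ to fixed points). The genuine obstacle --- and the second, essential use of atomlessness --- is excluding IRS supported on finite subgroups; this is the step where finiteness of the number of conjugacy classes of bounded-order finite subgroups and Borel density for finite-covolume subgroups must be brought in to collapse such an IRS onto the atom $\delta_{\{e\}}$.
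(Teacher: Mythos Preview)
Your proof is correct and follows the same route as the paper: push $\mu$ forward along $H\mapsto\Lie(H)$ and $H\mapsto\Lie(\overline H^{Z})$ to the Grassmannian of $\fg$, apply Furstenberg's lemma to force the values into $\{0,\fg\}$, and use atomlessness to eliminate the unwanted alternative. The one place you are more explicit is the exclusion of non-trivial finite subgroups --- the paper dispatches this in a single line (``countably many conjugacy classes of non-trivial finite subgroups, hence their union has $\mu$-measure zero for any non-atomic IRS''), while you spell out, via ergodic decomposition and the classical Borel density theorem applied to $N_G(H_0)$, why an individual conjugacy class cannot carry positive mass; that is precisely the content hidden behind the paper's ``hence'', since a conjugacy class is not a single point.
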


%

Note that since $G$ is simple, the the only possible atoms are at the trivial group $\{ 1\}$ and at $G$. Since $G$ is an isolated point in $\text{Sub}_G$, it follows that
$$
 \irs_d(G):=\{\mu\in\text{IRS}(G):\text{a}~\mu\text{-random subgroup is a.s. discrete}\}
$$  
is a compact space. We shall refer to the points of $\irs_d$ as discrete IRS.

\medskip

In order to prove Theorem \ref{thm:BDT} one first observes that there are only countably many conjugacy classes of non-trivial finite subgroups in $G$, hence the measure of their union is zero with respect to any non-atomic IRS. Then one can apply the same idea as in Furstenberg's proof of the classical Borel density theorem \cite{Fur}. Indeed, taking the Lie algebra of $H\in\text{sub}_G$ as well as of its Zariski closure induce measurable maps (see \cite{GL})
$$
 H\mapsto \text{Lie}(H),~H\mapsto\text{Lie}(\overline{H}^Z)
$$
As G is noncompact, Furstenberg's argument implies that the Grassman variety of non-trivial subspaces of $\text{Lie}(G) $ does not carry an $\Ad(G)$-invariant measure. It follows that $ \text{Lie}(H)= 0$ and $\text{Lie}(\overline{H}^Z)=
\text{Lie}(G)$
almost surely, and the two statements of the theorem follow.


\begin{rem}
The analog of Theorem \ref{thm:BDT} holds, more generally, in the context of semisimple analytic groups over local fields, see \cite{GL}.
\end{rem}

\subsection{Kazhdan--Margulis theorem}\label{sec:KM}


\begin{defn}
A family $\mathcal{F}\subset\irs(G)$ of invariant random subgroups is said to be \it{weakly uniformly discrete} if for every $\gep>0$ there is an identity neighbourhood $U\subset G$ such that
$$
 \mu (\{\gC\in\sub_G:\gC\cap U\ne\{1\}\})<\gep
$$
for every $\mu\in \mathcal{F}$.
\end{defn}

\begin{thm}\label{thm:WUD}
Let $G$ be a connected non-compact simple Lie group. Then $\irs_d(G)$ is weakly uniformly discrete.
\end{thm}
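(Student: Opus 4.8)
The plan is to derive Theorem~\ref{thm:WUD} from the Borel density theorem for IRS (Theorem~\ref{thm:BDT}) together with a compactness argument in $\text{IRS}(G)$ and the Kazhdan--Margulis/Zassenhaus machinery. First I would reformulate weak uniform discreteness as a statement about a single function on the compact space $\irs_d(G)$. For an identity neighbourhood $U$, set
$$
 f_U(\mu) := \mu\big(\{\gC\in\sub_G : \gC\cap U\ne\{1\}\}\big).
$$
If $U$ is chosen with compact closure and so that $\{\gC : \gC\cap U \ne \{1\}\}$ is (up to boundary issues) open, then $f_U$ is lower semicontinuous in $\mu$; shrinking $U$ makes $f_U$ pointwise decreasing. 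The claim ``$\irs_d(G)$ is weakly uniformly discrete'' is exactly: $\inf_U \sup_{\mu\in\irs_d(G)} f_U(\mu) = 0$. Since $\irs_d(G)$ is compact (noted in the excerpt, using that $G$ is isolated in $\sub_G$), it would suffice to show that for each fixed $\mu\in\irs_d(G)$ one has $\inf_U f_U(\mu)=0$, i.e. that a $\mu$-random subgroup a.s.\ intersects small enough neighbourhoods of $1$ trivially, and then to upgrade this pointwise statement to a uniform one.

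For the pointwise statement, fix $\mu\in\irs_d(G)$. A $\mu$-random $\gC$ is a.s.\ discrete, and by the Borel density theorem for IRS (applied after disposing of the atom at $\{1\}$, which contributes nothing to any $f_U$) a.s.\ either trivial or discrete and Zariski dense. Here the key input is the Zassenhaus--Kazhdan--Margulis theorem: there is a fixed identity neighbourhood $U_0$ (the Zassenhaus neighbourhood of Theorem~\ref{Zass}) such that for every discrete $\gC\le G$, the group $\langle \gC\cap U_0\rangle$ is nilpotent, hence contained in a proper connected (nilpotent) subgroup, hence \emph{not} Zariski dense. Therefore for a Zariski-dense discrete $\gC$ the set $\gC\cap U_0$ cannot generate $\gC$; more is true and more is needed --- one wants $\gC\cap U$ to be literally $\{1\}$ for small $U$. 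This follows because a Zariski-dense discrete subgroup is in particular non-trivial and discrete, so $\gC\cap U = \{1\}$ for all sufficiently small $U$ depending on $\gC$; letting $U$ range over a countable neighbourhood basis $U_1\supset U_2\supset\cdots$ we get $f_{U_n}(\mu)\to \mu(\{\{1\}\})$ by dominated convergence, and the right-hand side equals $0$ once we have also shown $\mu$ gives no mass to the trivial group --- which is false in general, but harmless: atoms at $\{1\}$ do not affect $f_U$ since $\{1\}\cap U = \{1\}$, so we should instead define $f_U$ with ``$\gC\cap U \not\subset$ any fixed finite set'' — cleaner is to simply observe $f_{U_n}(\mu)\to 0$ directly because $\{1\}\cap U_n=\{1\}$ is excluded in the event $\gC\cap U\ne\{1\}$.

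The main obstacle is the passage from this pointwise convergence to \emph{uniformity over all} $\mu\in\irs_d(G)$, since a priori the neighbourhood $U$ needed depends on $\mu$. This is where compactness of $\irs_d(G)$ and semicontinuity of $f_U$ enter: suppose uniformity fails, so there is $\gep>0$ and, for every $n$, some $\mu_n\in\irs_d(G)$ with $f_{U_n}(\mu_n)\ge\gep$. Pass to a weak-$*$ convergent subsequence $\mu_n\to\mu_\infty\in\irs_d(G)$. For fixed $m$ and all $n\ge m$ we have $f_{U_m}(\mu_n)\ge f_{U_n}(\mu_n)\ge\gep$, and by lower semicontinuity $f_{U_m}(\mu_\infty)\ge\gep$ for every $m$; this contradicts $f_{U_m}(\mu_\infty)\to 0$. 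The delicate points to get right are (i) arranging the sets $\{\gC : \gC\cap U\ne\{1\}\}$ to be genuinely open, which needs $U$ open with $\overline{U}$ compact and a little care with the Chabauty topology (use $U$ symmetric and pass to $\overline U$ versus $U$ as needed for the semicontinuity direction), and (ii) ensuring the ``atom at $G$'' plays no role, which is immediate since $G$ is isolated and discrete IRS give it no mass. I expect (i) to be the only real technical nuisance; everything else is Borel density for IRS plus the Zassenhaus neighbourhood plus soft compactness.
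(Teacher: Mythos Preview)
Your argument is correct and is essentially the paper's proof: assume failure, extract $\mu_n\in\irs_d(G)$ with $f_{U_n}(\mu_n)\ge\gep$, pass to a weak-$*$ limit $\mu_\infty$ using compactness of $\irs_d(G)$, and use that $\{\gC:\gC\cap U\ne\{1\}\}=O_1(U\setminus\{1\})$ is Chabauty-open to get $f_{U_m}(\mu_\infty)\ge\gep$ for every $m$, contradicting $\mu_\infty\in\irs_d(G)$. One remark: your second paragraph's detour through Borel density and the Zassenhaus neighbourhood is unnecessary for the pointwise statement --- as you yourself note at the end of that paragraph, $f_{U_n}(\mu)\to 0$ for $\mu\in\irs_d(G)$ is immediate from the definition (a.s.\ discreteness plus dominated convergence), and Borel density enters only upstream, in the proof that $\irs_d(G)$ is compact.
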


Assume, in contrary, that for some $\gep>0$ we can find for every identity neighbourhood $U\subset G$ a discrete IRS $\mu_U$ such that 
$$
 \mu_U\big( \{\gC\in\sub_G:\gC\cap U~\text{non-trivially}\}\big)\ge\gep.
$$
Then letting $U$ run over a suitable base of identity neighbourhoods and taking a weak limit $\mu$, it would follow that $\mu$ is not discrete in contrast to the compactness of $\irs_d(G)$.

\medskip 

As a straight forward consequence, taking $\gep<1$, we deduce the classical Kazhdan--Margulis theorem, and in particular the positivity of the lower bound on the volume of locally $G/K$-manifolds:

\begin{cor}[Kazhdan--Margulis theorem \cite{KM}]
There is an identity neighbourhood $\gO\subset G$ such that for every lattice $\gC\le G$ there is $g\in G$ such that $g\gC g^{-1}\cap \gO=\{1\}$.
\end{cor}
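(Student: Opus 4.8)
The plan is to derive the Kazhdan--Margulis corollary directly from Theorem \ref{thm:WUD} by exhibiting, for each lattice $\gC\le G$, the associated IRS $\mu_\gC$ defined in \ref{subsection:examples}(2) and observing that it lies in $\irs_d(G)$. First I would recall that $\mu_\gC$ is obtained by pushing the $G$-invariant probability measure on $G/\gC$ forward under $g\gC\mapsto g\gC g\inv$; since $\gC$ is discrete, every conjugate $g\gC g\inv$ is discrete, so $\mu_\gC$ is supported on discrete subgroups, i.e. $\mu_\gC\in\irs_d(G)$. (One should note in passing the trivial cases: if $G$ is itself discrete the statement is vacuous, and a connected non-compact simple Lie group is never compact, so lattices exist and are genuinely infinite; these remarks are not needed for the logic but clarify the setting.)

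Next I would apply Theorem \ref{thm:WUD} with a fixed choice $\gep<1$, say $\gep=\tfrac12$. The theorem provides an identity neighbourhood $\gO\subset G$ such that
$$
 \mu\big(\{\gL\in\sub_G:\gL\cap\gO\ne\{1\}\}\big)<\tfrac12
$$
for every $\mu\in\irs_d(G)$, in particular for $\mu=\mu_\gC$. Then the complementary event $\{\gL\in\sub_G:\gL\cap\gO=\{1\}\}$ has $\mu_\gC$-measure strictly greater than $\tfrac12$, hence is non-empty. Unwinding the definition of $\mu_\gC$ as a push-forward of the measure on $G/\gC$, a point in this event is exactly a coset $g\gC$ with $g\gC g\inv\cap\gO=\{1\}$, which is the desired conclusion. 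In fact, since the set of such $g\gC$ has positive measure in $G/\gC$, one even obtains a positive-measure set of good conjugates, a mild strengthening.

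The only real content is hidden upstream, in Theorem \ref{thm:WUD}, whose proof in turn rests on the compactness of $\irs_d(G)$ — which is itself a consequence of the Borel density theorem for IRS (Theorem \ref{thm:BDT}) together with the isolatedness of $G$ in $\text{Sub}_G$ (Proposition \ref{prop:G-isolated}), guaranteeing that $\irs_d(G)$ is closed in the compact space $\irs(G)$. Granting all of that, the step I expect to require the most care is purely bookkeeping: verifying that the measurable map $G/\gC\to\sub_G$ is well-defined and that the push-forward measure $\mu_\gC$ is genuinely a conjugation-invariant Borel probability measure supported on discrete subgroups, so that Theorem \ref{thm:WUD} legitimately applies to it. There is no analytic obstacle here; the corollary is a formal extraction from the weak uniform discreteness statement once the dictionary between lattices and their IRS is in place.
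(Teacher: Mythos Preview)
Your proposal is correct and follows essentially the same route as the paper: apply Theorem \ref{thm:WUD} with some $\gep<1$ to the discrete IRS $\mu_\gC$ attached to a lattice, and read off a conjugate meeting $\gO$ trivially from the fact that the complementary event has positive (hence non-zero) measure. The paper states this in a single line; your write-up simply spells out the bookkeeping (that $\mu_\gC\in\irs_d(G)$ and how to unwind the push-forward) more explicitly.
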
 

Viewing the stabilizer of a random point in a probability measure preserving $G$-space as an IRS, Theorem \ref{thm:WUD} can be reformulated as follows:

\begin{thm}[p.m.p actions are uniformly weakly locally free]
For every $\gep>0$ there is an identity neighbourhood $U\subset G$ such that for every non-trivial ergodic p.m.p $G$-space $(X,m)$ there is a subset $Y\subset X$ with $m(Y)>1-\gep$ such that $u\cdot y\ne y$ for all $y\in Y$ and $u\in U$.
\end{thm}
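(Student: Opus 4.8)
The plan is to read the statement as the p.m.p.-action reformulation of Theorem~\ref{thm:WUD}. Fix $\gep>0$. Since $\irs_d(G)$ is weakly uniformly discrete by Theorem~\ref{thm:WUD}, I may choose, once and for all, an identity neighbourhood $U\subset G$ with
$$
 \mu\big(\{\gC\in\text{Sub}_G:\gC\cap U\ne\{1\}\}\big)<\gep\qquad\text{for every }\mu\in\irs_d(G).
$$
I claim this $U$ witnesses the theorem: the idea is to attach to each p.m.p.\ action its stabilizer IRS and to feed that IRS into the displayed estimate. The only point that genuinely needs checking --- and it is where the ergodicity and non-triviality hypotheses enter --- is that the stabilizer IRS of a non-trivial ergodic action actually lies in $\irs_d(G)$.

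So let $(X,m)$ be a non-trivial ergodic p.m.p.\ $G$-space. By Varadarajan's theorem the stabilizer map $s\colon X\to\text{Sub}_G$, $x\mapsto G_x$, is measurable and $G$-equivariant, so $\mu:=s_*m$ is an IRS. To see $\mu\in\irs_d(G)$, recall (from the discussion preceding Theorem~\ref{thm:BDT}) that $H\mapsto\Lie(H)$ is measurable on $\text{Sub}_G$; hence $x\mapsto\dim G_x$ is a measurable function on $X$, constant on $G$-orbits, so by ergodicity it equals $m$-a.e.\ a constant $d$. If $d=\dim G$ then $G_x$ contains the connected, full-dimensional identity component of $G$, hence $G_x=G$ a.s., and the action is trivial, contrary to hypothesis. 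If $0<d<\dim G$, then $x\mapsto\Lie(G_x)$ is a measurable $G$-map from $X$ into the variety of $d$-dimensional --- thus proper and non-trivial --- subspaces of $\Lie(G)$, with $G$ acting through $\Ad$; pushing $m$ forward would give an $\Ad(G)$-invariant probability measure on that variety, which is impossible since $G$ is non-compact. This is exactly the Furstenberg-type input used to prove Theorem~\ref{thm:BDT}. Hence $d=0$, so $G_x$ is discrete for $m$-a.e.\ $x$ and $\mu\in\irs_d(G)$.

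It remains to unwind definitions. The set $B:=\{\gC\in\text{Sub}_G:\gC\cap U\ne\{1\}\}$ equals $O_1(U\setminus\{1\})$, where $U\setminus\{1\}$ is open because $G$ is Hausdorff; so $B$ is open, $s^{-1}(B)$ is measurable, and $m(s^{-1}(B))=\mu(B)<\gep$. Put $Y:=X\setminus s^{-1}(B)=\{x\in X:G_x\cap U=\{1\}\}$, so that $m(Y)>1-\gep$; and for $y\in Y$ and $u\in U$ with $u\ne 1$ we cannot have $u\cdot y=y$, since this would put $u$ in $G_y\cap U=\{1\}$. (As $1\in U$ fixes every point, ``for all $u\in U$'' in the statement is to be read as $u\ne 1$.) As for the main obstacle: essentially none is left in this last step --- everything substantive has been imported, namely the compactness of $\irs_d(G)$ behind Theorem~\ref{thm:WUD} and the non-existence of $\Ad(G)$-invariant measures on Grassmannians of proper non-trivial subspaces of $\Lie(G)$. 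The only real work is the middle paragraph: the measurability and equivariance bookkeeping and, above all, the use of ergodicity together with non-triviality to exclude positive-dimensional stabilizers, so that the stabilizer IRS genuinely lands in $\irs_d(G)$ and not merely in $\irs(G)$.
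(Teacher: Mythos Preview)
Your proof is correct and follows exactly the route the paper indicates: the theorem is explicitly presented there as the p.m.p.\ reformulation of Theorem~\ref{thm:WUD} via the stabilizer map, and you have merely supplied the details (in particular the ergodicity/Furstenberg argument, parallel to the proof sketch of Theorem~\ref{thm:BDT}, to ensure the stabilizer IRS lands in $\irs_d(G)$). There is nothing to add.
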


\medskip

For complete proofs of the results of this subsection, in a more general setup, see \cite{WUD}.

\subsection{Stuck--Zimmer rigidity theorem}

Perhaps the first result about IRS and certainly one of the most remarkable, is the Stuck--Zimmer rigidity theorem, which can be regarded as a (far reaching) generalisation of Margulis' normal subgroup theorem.

\begin{thm}[Stuck--Zimmer \cite{SZ}\label{thm:SZ}]
Let $G$ be a connected simple Lie group of real rank $\ge 2$. Then every ergodic pmp action of $G$ is either (essentially) free or transitive.
\end{thm}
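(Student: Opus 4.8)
The plan is to pass to the invariant random subgroup carried by the stabilizer map, eliminate the possible atoms, apply the Borel density theorem for IRS to reduce to discrete Zariski dense stabilizers, and then feed this into the higher-rank rigidity machinery. Concretely, let $G\act(X,m)$ be ergodic and pmp. By Varadarajan's theorem the stabilizer map $\Phi\colon X\to\text{Sub}_G$, $x\mapsto G_x$, is $m$-a.e.\ defined and measurable, and it is $G$-equivariant; hence $\mu:=\Phi_*m\in\irs(G)$, and $\mu$ is $G$-ergodic on $\text{Sub}_G$, since the $\Phi$-preimage of a nontrivial $G$-invariant set would be a nontrivial $G$-invariant subset of $X$. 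A measure has only countably many atoms and conjugation permutes them in orbits of equal mass, so every atom of $\mu$ has finite conjugacy class, hence, $G$ being connected and simple, lies at $\{1\}$ or at $G$; ergodicity then rules out proper mixtures, so $\mu$ is $\delta_{\{1\}}$, $\delta_G$, or atomless. If $\mu=\delta_{\{1\}}$ the action is essentially free; if $\mu=\delta_G$ a positive-measure set of points is fixed and ergodicity forces $X$ to be a point, i.e.\ the action is transitive. So we may assume $\mu$ is atomless, and Theorem~\ref{thm:BDT} gives: for $m$-a.e.\ $x$ the stabilizer $\Delta:=G_x$ is discrete and Zariski dense in $G$.

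The task is now to show that, under these hypotheses, the action cannot be simultaneously non-free and non-transitive, and this is where real rank at least $2$ enters essentially. As orientation, note that the orbit $G\cdot x$ is $G$-equivariantly a copy of $\Delta\backslash G$, and that $N_G(\Delta)$ is itself discrete (its identity component would centralize $\Delta$, hence centralize $\overline{\Delta}^Z=G$, so lie in the trivial centre). If one knew that $\Delta$ had finite covolume, the push-forward to $G\cdot x$ of the Haar probability measure on $\Delta\backslash G$ would be a finite $G$-invariant measure carried by a single orbit, and ergodicity would identify $(X,m)$ with that homogeneous space, so the action would be transitive. Thus, heuristically, the point is to promote ``discrete and Zariski dense'' to ``lattice''; but Zariski density alone is far too weak (thin subgroups are Zariski dense), so one must exploit both that $\Delta$ is the stabilizer in a finite-measure-preserving action and the higher-rank structure of $G$.

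Carrying this out is the heart of the Stuck--Zimmer argument and the main obstacle. The idea is to restrict the $G$-action to a minimal parabolic $P=MAN$ and to its unipotent radical $N$, and to play the amenability of $P$ against property (T) of $G$ and the Howe--Moore mixing of the $G$-representation on $L^2_0(X)$; Zimmer's cocycle superrigidity theorem is then applied to the measurable cocycle attached to the stabilizer data (equivalently, to the $G$-action read off the fibres of $\Phi$), and Poincaré recurrence of the diagonalizable flow $A$ propagates the resulting algebraic constraints. Together these should force $\Delta\backslash G$ to carry a finite invariant measure unless $\Delta$ is trivial, which is exactly the desired free/transitive dichotomy. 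The real-rank-$\ge 2$ hypothesis is indispensable precisely here --- it is what makes the relevant cocycles superrigid and the parabolic/amenability dichotomy effective --- and indeed the statement is false in rank one (for instance $\PSL_2(\BR)$ has properly ergodic non-free pmp actions, such as those induced from infinite-index normal subgroups of lattices).
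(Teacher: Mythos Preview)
The paper does not prove this theorem; it is quoted from \cite{SZ} and used as a black box (in particular, the paper's classification of ergodic IRS in Section~\ref{section:HR} is \emph{derived from} Theorem~\ref{thm:SZ}, not the other way around). So there is no paper proof to compare your proposal against.

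That said, your proposal is not a proof. The first two paragraphs carry out legitimate reductions --- pushing to the stabilizer IRS, disposing of the atomic cases, and applying Theorem~\ref{thm:BDT} to land on ``a.e.\ stabilizer is discrete and Zariski dense''. But the third paragraph, which you yourself call ``the heart of the Stuck--Zimmer argument and the main obstacle'', is a list of ingredients rather than an argument. You name cocycle superrigidity, property~(T), Howe--Moore mixing, and Poincar\'e recurrence for $A$, without specifying which cocycle is under consideration, what its target group is, what superrigidity actually outputs, or how that output forces $\Delta\backslash G$ to have finite volume. None of these steps can be checked or completed from what you wrote; the gap is exactly the content of the theorem.

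There is also a mismatch with the genuine mechanism in \cite{SZ}. The Stuck--Zimmer proof does not run through cocycle superrigidity in the sense you suggest. The decisive input is an \emph{intermediate factor theorem} (of Nevo--Zimmer type): one studies the $G$-space $X\times G/P$ with its natural measure class, and shows that any $G$-factor lying between $X$ and $X\times G/P$ must be of the form $X\times G/Q$ for a parabolic $Q$. Combining this with amenability of $P$ and the finite-measure assumption on $X$ pins down the stabilizers. If you intend to prove the theorem rather than quote it, that is the structure you need to make precise; the assortment of tools in your last paragraph does not assemble into this argument.
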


In view of Theorem \ref{thm:satbilizers}, one can read Theorem \ref{thm:SZ} as: {\it every non-atomic ergodic IRS in $G$ is of the form $\mu_\gC$ for some lattice $\gC\le G$}.

\medskip

In order to see that Margulis normal subgroup theorem is a consequence of Theorem \ref{thm:SZ}, suppose that $\gC\le G$ is a lattice and $N\lhd\gC$ is a normal subgroup in $\gC$. As described in Example \ref{subsection:examples}$(3)$ there is an IRS $\mu$ supported on (the closure of) the conjugacy class of $N$. Since $G\act G/\gC$ is ergodic and $(\text{Sub}_G,\mu)$ is a factor, it is also ergodic. In view of Theorem \ref{thm:satbilizers} there is an ergodic pmp $G$-space $(X,m)$ such that the stabilizer of an $m$-random point is a $\mu$-random conjugate of $N$. By Theorem \ref{thm:SZ}, $X$ is transitive, i.e. $X=G/N$. It follows that $G/N$ carries a $G$-invariant probability measure. Thus $N$ is a lattice, hence of finite index in $\gC$.

\begin{rem}
\begin{enumerate}
\item Stuck and Zimmer proved the theorem for the wider class of higher rank semisimple groups with property $(T)$, where in this case the assumption is that every factor is noncompact and acts ergodically. The situation for certain groups, such as $\SL_2(\BR)\times\SL_2(\BR)$ is still unknown.

\item Recently A. Levit \cite{Levit} proved the analog result for analytic groups over non-archimedean local fields. 
\end{enumerate}
\end{rem}

\medskip

\subsection{An exotic IRS in rank one}

In the lack of Margulis' normal subgroup theorem there are IRS supported on non-lattices. Indeed, if $G$ has a lattice with an infinite index normal subgroup $N\lhd\gC$, arguing as in the previous section, one obtains an ergodic p.m.p. space for which almost any stabilizer  is a conjugate of $N$. 

We shall now give a more interesting example, in the lack of rigidity:

\begin{exam}[An exotic IRS in $\PSL_2(\BR)$, \cite{7}]
Let $A,B$ be two copies of a torus with 2 open discs removed. We choose hyperbolic metrics on $A$ and $B$ so that all $4$ boundary components are geodesic circles of the same length, and such that $A$ admits a closed geodesic of length much smaller than the injectivity radius at any point of $B$. We may agree that one boundary component of $A$ (resp. of $B$) is `on the left side' and the other is `on the right side', and fix a special point on each boundary component, in order to specify a gluing pattern of a `left' copy and a `right' copy, each of either $A$ or $B$.\footnote{All the nice figures in these lecture notes were made by Gil Goffer}


\begin{figure}[h]
    \centering
    \includegraphics[width=0.8\textwidth]{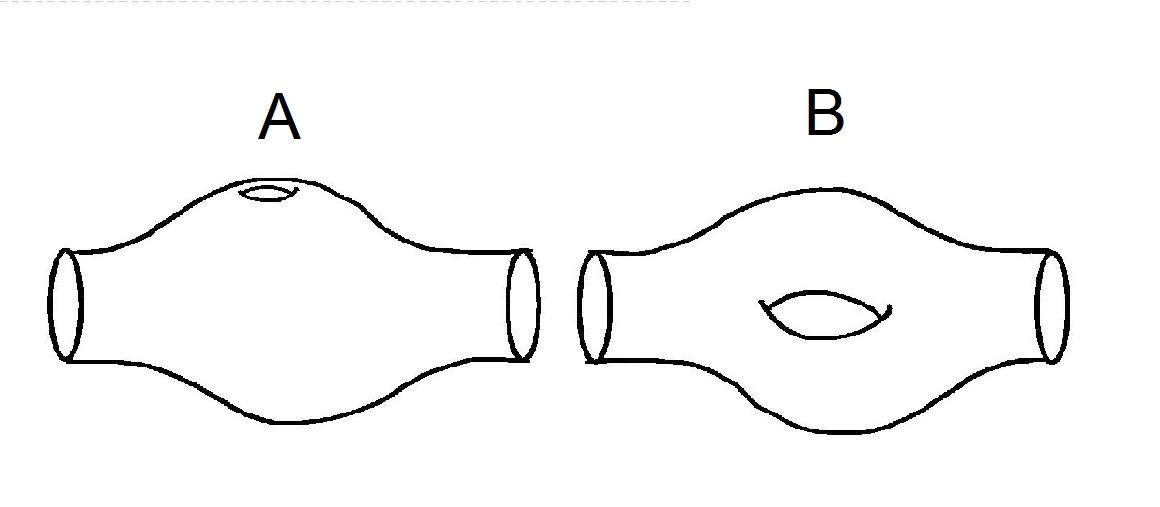}
    \caption{The hyperbolic building blocks}
    \label{fig:awesome_image}
\end{figure}


Now consider the space $\{A,B\}^\BZ$ with the Bernoulli measure $(\frac{1}{2},{1\over 2})^\BZ$. Any element $\ga$ in this space is a two sided infinite sequence of $A$'s and $B$'s and we can glue copies of $A,B$ `along a bi-infinite line' following this sequence. This produces a random surface $M^\ga$. Choosing a probability measure on the unit tangent bundle of $A$ (resp. of $B$) we define an IRS in $\PSL_2(\BR)$ as follows. First choose $M^\ga$ randomly, next choose a point and a unit tangent vector in the copy of $A$ or $B$ which lies at the place $M^\ga_0$ (above $0$ in the sequence $\ga$), then take the fundamental group of $M^\ga$ according to the chosen point and direction.


\begin{figure}[h]
    \centering
    \includegraphics[width=0.8\textwidth]{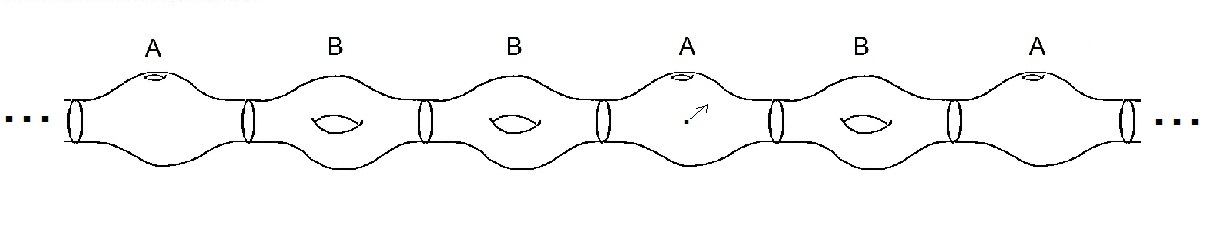}
    \caption{A random surface}
    \label{fig:awesome_image}
\end{figure}


As the $\BZ$-action on the Bernoulli space of sequences is ergodic one sees that the corresponding IRS is also ergodic. It can be shown that almost surely the corresponding group is not contained in a lattice in $\PSL_2(\BR)$. However, this IRS is co-sofic. Analog constructions can be made in $\SO(n,1)$ for all $n$'s (see \cite[Section 13]{7}).
\end{exam}

%
%
%
%
%
%
%
%
%
%
\section{The Gromov--Hausdorff topology}

Given a compact metric space $X$, the Hausdorff distance $\text{Hd}_X(A,B)$ between two closed subsets is defined as
$$
 \text{Hd}_X(A,B):=\inf\{\gep: N_\gep(A)\supset B~\text{and}~N_\gep(B)\supset A\},
$$
where $N_\gep(A)$ is the $\gep$-neighborhood of $A$. The space $2^X$ of closed subsets of $X$ equipped with the Hausdorff metric, is compact.

Given two compact metric spaces $X,Y$, the Gromov distance $\text{Gd}(X,Y)$ is defined as
$$
 \text{Gd}(X,Y):=\inf_Z\{\text{Hd}_Z(i(X),j(Y))\},
$$ 
over all compact metric spaces $Z$ admitting isometric copies $i(X),j(Y)$ of $X,Y$ respectively. 
If $(X,p),(Y,q)$ are pointed compact metric spaces, i.e. ones with a chosen point, we define the Gromov distance 
$$
 \text{Gd}((X,p),(Y,q)):=\inf_Z\{\text{Hd}_Z(i(X),j(Y))+d_Z(i(p),j(q))\}.
$$ 

The Gromov--Hausdorff distance between two pointed proper (not necessarily bounded) metric spaces $(X,p),(Y,q)$ can be defined as
$$
 \text{GHd}((X,p),(Y,q)):=\sum_{n\in\BN} {1\over 2^n}\text{Gd}((B_X(n),p),(B_Y(n),q)),
$$ 
where $B_X(n)$ is the ball of radius $n$ around $p$.

\section{The Benjamini--Schramm topology}

Let $\mathcal{M}$ be the be the space of all (isometry classes of) pointed proper metric spaces equipped with the Gromov--Hausdorff topology. This is a huge space and for many applications it is enough to consider compact subspaces of it obtained by bounding the geometry. That is, let $f(\gep,r)$ be an integer valued function defined on $(0,1)\times\BR^{>0}$, and let $\mathcal{M}_f$ consist of those spaces for which $\forall \gep,r$, the $\gep$-entropy of the $r$-ball 
$B_X(r,p)$ around the special point is bounded by $f(\gep,r)$, i.e. no $f(\gep,r)+1$ points in $B_X(r,p)$ form an $\gep$-discrete set. Then $\mathcal{M}_f$ is a compact subspace of $\mathcal{M}$.   

In many situations one prefers to consider some variants of $\mathcal{M}$ which carry more information about the spaces. 
For instance when considering graphs, it may be useful to add colors and orientations to the edges. The Gromov--Hausdorff distance defined on these objects should take into account the coloring and orientation.
Another example is smooth Riemannian manifolds, in which case it is better to consider framed manifolds, i.e. manifold with a chosen point and a chosen frame at the tangent space at that point. In that case, one replace the Gromov--Hausdorff topology by the ones determined by $(\gep,r)$ relations (see \cite[Section 3]{7} for details), which remembers also the directions from the special point.

We define the \blue{Benjamini--Schramm} space $\mathcal{BS}=\text{Prob}(\mathcal{M})$ to be the space of all Borel probability measures on $\mathcal{M}$ equipped with the weak-$*$ topology. Given $f$ as above, we set $\mathcal{BS}_f:=\text{Prob}(\mathcal{M}_f)$. Note that $\mathcal{BS}_f$ is compact.

The name of the space is chosen to hint that this is the same topology induced by `local convergence', introduced by Benjamini and Schramm in \cite{BS}, when restricting to measures on rooted graphs. Recall that a sequence of random rooted bounded degree graphs converges to a limiting distribution iff for every $n$ the statistics of the $n$ ball around the root (i.e. the probability vector corresponding to the finitely many possibilities for $n$-balls) converges to the limit. 

The case of general proper metric spaces can be described similarly. A sequence $\mu_n\in\mathcal{BS}_f$ converges to a limit $\mu$ iff for any compact pointed `test-space' $M\in\mathcal{M}$, any $r$ and arbitrarily small\footnote{This doesn't mean that it happens for all $\gep$.} $\gep>0$, the $\mu_n$ probability that the $r$ ball around the special point is `$\gep$-close' to $M$ tends to the $\mu$-probability of the same event.

\begin{exam}
An example of a point in $\mathcal{BS}$ is a measured metric space, i.e. a metric space with a Borel probability measure. 
A particular case is a finite volume Riemannian manifold --- in which case we scale the Riemannian measure to be one, and then randomly choose a point (and a frame).
\end{exam}

Thus a finite volume locally symmetric space $M=\gC\backslash G/K$ produces both a point in the Benjamini--Schramm space and an IRS in $G$. This is a special case of a general analogy that I'll now describe. Given a symmetric space $X$, let us denote by $\mathcal{M}(X)$ the space of all pointed (or framed) complete Riemannian orbifolds whose universal cover is $X$, and by
$\mathcal{BS}(X)=\text{Prob}(\mathcal{M}(X))$ the corresponding subspace of the Benjamini--Schramm space. 

Let $G$ be a non-compact simple Lie group with maximal compact subgroup $K\le G$ and an associated Riemannian symmetric space $X=G/K$. There is a natural map 
$$
 \{\text{discrete subgroups of }~G\}\to \mathcal{M}(X),~\gC\mapsto \gC\backslash X.
$$  
It can be shown that this map is continuous, hence inducing a continuous map
$$
 \irs_d(G)\to \mathcal{BS}(X).
$$
It can be shown that the later map is one to one, and since $\irs_d(G)$ is compact, it is
a homeomorphism to its image (see \cite[Corollary 3.4]{7}). 

\begin{exercise}[Invariance under the geodesic flow]
Given a tangent vector $\overline{v}$ at the origin (the point corresponding to $K$) of $X=G/K$, define a map 
$\mathcal{F}_{\overline{v}}$ from $\mathcal{M}(X)$ to itself by moving the special point using the exponent of 
$\overline{v}$ and applying parallel transport to the frame. This induces a homeomorphism of $\mathcal{BS}(X)$. Show that the image of $\irs_d(G)$ under the map above is exactly the set of $\mu\in \mathcal{BS}(X)$ which are invariant under $\mathcal{F}_{\overline{v}}$ for all $\overline{v}\in T_K(G/K)$.
\end{exercise}

Thus we can view geodesic-flow invariant probability measures on framed locally $X$-manifolds as IRS on $G$ and vice versa, and the Benjamini--Schramm topology on the first coincides with the IRS-topology on the second.

\begin{exercise}
Show that the analogy above can be generalised, to some extent, to the context of general locally compact groups. Given a locally compact group $G$, fixing a right invariant metric on $G$, we obtain a map $\text{Sub}_G\to\mathcal{M},~H\mapsto G/H$, where the metric on $G/H$ is the induced one. Show that this map is continuous and deduce that it defines a continuous map
$\text{IRS}(G)\to \mathcal{BS}$.
\end{exercise}

For the sake of simplicity we shall now forget `the frame' and consider pointed $X$-manifolds, and $\mathcal{BS}(X)$ as probability measures on such. We note that while for general Riemannian manifolds there is a benefit for working with framed manifolds, in the world of locally symmetric spaces of non-compact type, pointed manifolds, and measures on such, behave nicely enough.
 
In order to examine convergence in $\mathcal{BS}(X)$ it is enough to use as `test-space' balls in locally $X$-manifolds. Moreover, since $X$ is non-positively curved, a ball in an $X$-manifold is isometric to a ball in $X$ iff it is contractible. 

For an $X$-manifold $M$ and $r>0$, we denote by $M_{\ge r}$ the $r$-thick part in $M$:
$$
 M_{\ge r}:=\{x\in M:\text{InjRad}_M(x)\ge r\},
$$
where $\text{InjRad}_M(x)=\sup\{\gep:B_M(x,\gep)~\text{is contractible}\}$. 

Note that since $X$ is a homogeneous space, all choices of a probability measure on $X$ correspond to the same point in $\mathcal{BS}(X)$, and we shall denote this point by $X$, with a slight abuse of notations. We have the following simple characterisation of convergence to $X$:

\begin{prop}
A sequence $M_n$ of finite volume $X$-manifolds BS-converges to $X$ iff
$$
 \frac{\vol((M_n)_{\ge r})}{\vol(M_n)}\to 1,
$$
for every $r>0$.
\end{prop}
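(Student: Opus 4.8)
The plan is to translate Benjamini--Schramm convergence to the homogeneous point $X$ into a statement about the distribution of the injectivity radius, and then compare that with the volume-ratio condition. First I would unwind the definition of BS-convergence in this special case. Since $X$ is homogeneous, for every $x\in X$ the ball of radius $r$ about $x$ is isometric to the fixed model ball $B_X(r)$; hence the point $X\in\mathcal{BS}(X)$ assigns, for each $r>0$, full mass to the event ``the $r$-ball about the root is isometric to $B_X(r)$''. Using the triangle inequality for the Gromov distance one sees that closeness of an $r$-ball to any fixed pointed test space is controlled by its closeness to $B_X(r)$, so it suffices to test convergence against $B_X(r)$ alone. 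Therefore $M_n\to X$ in $\mathcal{BS}(X)$ if and only if for every $r>0$ and every $\varepsilon>0$,
\[
 \frac{1}{\vol(M_n)}\,\vol\big(\{x\in M_n:\ B_{M_n}(x,r)\ \text{is}\ \varepsilon\text{-close to}\ B_X(r)\}\big)\ \longrightarrow\ 1 .
\]

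Second, I would invoke the geometric dictionary recalled just before the statement: as $X$ is non-positively curved, for a point $x$ in an $X$-manifold $M$ the ball $B_M(x,r)$ is isometric to $B_X(r)$ precisely when it is contractible, i.e.\ precisely when $\text{InjRad}_M(x)\ge r$ (the distinction between ``$\ge r$'' and ``$>r$'' being irrelevant once $r$ is allowed to vary). This already yields the easy direction: if $\vol((M_n)_{\ge r})/\vol(M_n)\to1$ for every $r$, then each $x\in(M_n)_{\ge r}$ has $B_{M_n}(x,r)$ literally isometric to $B_X(r)$, so the quantity displayed above is $\ge\vol((M_n)_{\ge r})/\vol(M_n)\to1$ for every $\varepsilon>0$, and by the first step $M_n\to X$.

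For the converse I need a quantitative rigidity statement for the model ball: there should be a function $\eta(\varepsilon,r)$, with $\eta(\varepsilon,r)\to0$ as $\varepsilon\to0$, such that if $B_M(x,r)$ is $\varepsilon$-close to $B_X(r)$ then $\text{InjRad}_M(x)>r-\eta(\varepsilon,r)$. The mechanism is that a point of small injectivity radius $\delta<r$ forces $B_M(x,r)$ to contain a geodesic loop of length about $2\delta$ based near $x$, and as $\delta\to0$ such balls ``collapse'' to a space of dimension strictly smaller than $\dim X$; this is where non-positive curvature enters, since the exponential map of $X$ is a global diffeomorphism, so $B_X(r)$ is genuinely full-dimensional and cannot be approximated by thin tubes. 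Granting this, if $M_n\to X$ then for every $r$ and $\varepsilon$ the displayed probability tends to $1$, whence $\vol((M_n)_{\ge r-\eta(\varepsilon,r)})/\vol(M_n)\to1$; letting $\varepsilon\to0$ and letting $r$ run over all positive reals gives $\vol((M_n)_{\ge\rho})/\vol(M_n)\to1$ for every $\rho>0$.

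The main obstacle is exactly this rigidity/collapsing estimate. A clean way to organize it is by contradiction and compactness within the class of $X$-manifolds: a sequence of $r$-balls $B_{M_k}(x_k,r)$ with $\text{InjRad}_{M_k}(x_k)\to0$ which converged to $B_X(r)$ would, by the Margulis lemma and the thick--thin description of $X$-manifolds, sub-converge to a quotient of a subset of $X$ by a non-trivial almost-translation, a space of dimension $<\dim X$, hence not $B_X(r)$ --- a contradiction. I would also address, only briefly, two harmless technical points: the argument is insensitive to non-compactness of the $M_n$ (only a random point of finite total volume is used), and allowing orbifold points changes nothing, since the singular locus has measure zero.
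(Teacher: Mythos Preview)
The paper does not actually supply a proof of this proposition; it is stated as a ``simple characterisation'' immediately after the two observations that (i) it suffices to test against balls in locally $X$-manifolds and (ii) such a ball is isometric to a ball in $X$ iff it is contractible. Your write-up is considerably more careful than the paper, and the overall strategy is correct: the forward direction is exactly the one-line argument you give, and for the converse you correctly isolate the only non-tautological point, namely that Gromov--Hausdorff closeness of $B_M(x,r)$ to $B_X(r)$ forces the injectivity radius at $x$ to be close to $r$.

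There is one genuine slip in your compactness argument. Negating the rigidity statement ``$\eta(\varepsilon,r)\to 0$'' yields a fixed $c>0$ and balls $B_{M_k}(x_k,r)$ with $d_{GH}(B_{M_k}(x_k,r),B_X(r))\to 0$ while $\text{InjRad}_{M_k}(x_k)\le r-c$; it does \emph{not} yield $\text{InjRad}_{M_k}(x_k)\to 0$, which is what you assumed. After passing to a subsequence with $\text{InjRad}_{M_k}(x_k)\to\delta_\infty\in[0,r-c]$ you must treat two cases. If $\delta_\infty=0$, your collapsing picture (via the Margulis lemma and the tube/cusp description of the thin part) is the right mechanism and shows the limit is lower-dimensional. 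If $\delta_\infty>0$, the collapsing argument does not apply; instead, the short geodesic loops of length $\to 2\delta_\infty$ through $x_k$ sub-converge to a closed geodesic of positive length through the basepoint of the limit, which is impossible in $B_X(r)$ since $X$ has no closed geodesics. With this two-case split the argument goes through. A cleaner alternative, which avoids the split entirely, is to use volume: $\text{InjRad}_M(x)\le r-c$ forces a definite overlap among the lifted balls and hence $\vol(B_M(x,r))\le \vol(B_X(r))-v(c,r)$ for some $v(c,r)>0$ depending only on $X$; since volume of $r$-balls is continuous under the convergence in $\mathcal{M}_f$, this contradicts convergence to $B_X(r)$.
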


\section{Higher rank and rigidity}\label{section:HR} 

Suppose now that $G$ is a non-compact simple Lie group of real rank at least $2$. The following result from \cite{7} can be interpreted as `large manifolds are almost everywhere fat':

\begin{thm}[\cite{7}]\label{thm:main}
Let $M_n=\gC_n\backslash X$ be a sequence of finite volume $X$-manifolds with $\vol(M_n)\to\infty$. Then $M_n\to X$ in the Benjamini--Schramm topology.
\end{thm}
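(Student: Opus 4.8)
The plan is to combine the Borel density theorem for IRS (Theorem \ref{thm:BDT}) with the compactness of $\irs_d(G)$ and the higher-rank rigidity of Stuck--Zimmer (Theorem \ref{thm:SZ}). Each finite volume manifold $M_n=\gC_n\backslash X$ gives an IRS $\mu_n=\mu_{\gC_n}\in\irs_d(G)$, and under the homeomorphism $\irs_d(G)\to\mathcal{BS}(X)$ the statement ``$M_n\to X$ in Benjamini--Schramm'' is equivalent to ``$\mu_n\to\delta_{\{1\}}$ in $\irs(G)$''. So I would reduce the theorem to the purely IRS-theoretic statement: \emph{any weak-$*$ limit point $\mu$ of a sequence $\mu_{\gC_n}$ with $\vol(\gC_n\backslash X)\to\infty$ must be the trivial IRS $\delta_{\{1\}}$.} Passing to a subsequence, assume $\mu_{\gC_n}\to\mu$.

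First I would observe that $\mu$ is supported on discrete subgroups: $\irs_d(G)$ is compact (it is closed in $\irs(G)$ because $\{1\}$ and $G$ are the only atoms available and $G$ is isolated in $\mathrm{Sub}_G$ by Proposition \ref{prop:G-isolated}), so the limit of discrete IRS is discrete. Next, suppose $\mu\ne\delta_{\{1\}}$ and derive a contradiction. Write $\mu = c\,\delta_{\{1\}} + (1-c)\mu'$ where $\mu'$ is the normalized restriction to non-trivial discrete subgroups; if $c<1$ then $\mu'$ is a nonzero IRS that is non-atomic — here one uses that by Theorem \ref{thm:BDT} a non-atomic IRS is a.s. Zariski dense, and conversely that $\mu$ cannot have an atom at a non-trivial subgroup since any such subgroup would have to be normal (its $G$-orbit carries an invariant measure, forcing a finite orbit, hence a normal subgroup of positive dimension or a finite normal subgroup — impossible for $G$ simple center-free and non-compact, noting only countably many finite conjugacy classes exist). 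Decomposing $\mu'$ into ergodic components and invoking Stuck--Zimmer, each ergodic component of $\mu'$ is of the form $\mu_{\gL}$ for an honest lattice $\gL\le G$. Thus $\mu$ is (after removing the atom at $\{1\}$) a mixture of lattice IRS $\mu_{\gL}$.

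The heart of the argument is then a volume lower-bound / escape-of-mass estimate showing this is incompatible with $\vol(M_n)\to\infty$. The point is that $\mu_{\gC_n}$ converges to an IRS that ``sees a lattice with positive probability'': concretely, by weak uniform discreteness (Theorem \ref{thm:WUD}) there is a fixed identity neighborhood $U$ and $\e>0$ so that for all $n$, $\mu_{\gC_n}(\{\gC\cap U\ne\{1\}\})<\e$, i.e. a definite proportion of $M_n$ is $r_0$-thick for a fixed $r_0$. That alone is not enough; the real mechanism is that if a positive $\mu$-mass is carried by some lattice $\mu_\gL$, then for $n$ large a positive proportion of the points of $M_n$ have their $R$-ball (for every fixed $R$) isometric to the corresponding ball in $\gL\backslash X$, and since $\gL\backslash X$ has finite volume $v_0$, a uniform-in-$n$ fraction of $M_n$ lies in a portion that ``looks like'' a fixed finite-volume manifold. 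Combined with a packing argument in $X$ (balls of radius $R$ around lifts are essentially disjoint in the thick part, each of volume $\ge \mathrm{vol}(B_X(R))\to\infty$ as $R\to\infty$) one bounds $\vol(M_n)$ \emph{above} in terms of how concentrated the BS-measure is away from $X$ — contradiction with $\vol(M_n)\to\infty$. I expect \emph{this quantitative step} — turning ``$\mu$ charges a lattice'' into an a priori upper bound on $\vol(M_n)$, uniformly along the sequence — to be the main obstacle; it requires care because $M_n$ need not be compact and one must handle the thin parts (cusps) using the Kazhdan--Margulis / Margulis-constant structure of thin tubes and the fact that the thin part contributes boundedly to the relevant statistics. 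The alternative, cleaner route is to phrase everything on the IRS side: show directly that $\irs_d(G)\setminus\{\delta_{\{1\}}\}$ decomposes via Stuck--Zimmer into the $\mu_\gL$, that $\gL\mapsto\mathrm{vol}(\gL\backslash X)$ is lower semicontinuous and proper on this set, and that $\mu_{\gC_n}\to\mu$ with $\mu\ne\delta_{\{1\}}$ forces $\liminf\vol(M_n)<\infty$ by a Fatou-type inequality for the co-volume functional — this is essentially the argument of \cite{7} and is where I would put the technical weight.
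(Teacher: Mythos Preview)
Your overall strategy --- translate to IRS via the homeomorphism $\irs_d(G)\to\mathcal{BS}(X)$, pass to a weak-$*$ limit $\mu$, and use Stuck--Zimmer to classify the possible limits --- matches the paper. Two points deserve comment, and the second is a genuine gap.

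\medskip

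\textbf{Ergodicity of the limit.} You bypass the question of whether $\mu$ is ergodic by writing $\mu=c\,\delta_{\{1\}}+(1-c)\mu'$ and decomposing $\mu'$ into ergodic pieces, each a lattice IRS $\mu_\Lambda$ by Stuck--Zimmer. The paper takes a shorter route: since $G$ has property~(T), the Glasner--Weiss theorem (Theorem~\ref{thm:GW}) says a weak-$*$ limit of ergodic invariant measures is ergodic, so $\mu$ itself is already one of $\delta_G$, $\delta_{\{1\}}$, or a single $\mu_\Lambda$. Your detour through the ergodic decomposition is not wrong, but you never invoke property~(T), and that is a warning sign: the theorem is false in rank one, so somewhere beyond Stuck--Zimmer the higher-rank hypothesis must be used again.

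\medskip

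\textbf{Ruling out $\mu=\mu_\Lambda$.} This is where your proposal has a real gap. Neither of your suggested mechanisms works as stated. The packing sketch (``balls of radius $R$ around lifts are essentially disjoint, each of volume $\ge\vol(B_X(R))$'') points in the wrong direction: many disjoint large balls would give a \emph{lower} bound on $\vol(M_n)$, not the upper bound you need. The alternative --- ``$\Lambda\mapsto\vol(\Lambda\backslash X)$ is lower semicontinuous and proper on lattice IRS, so a Fatou inequality forces $\liminf\vol(M_n)<\infty$'' --- is precisely the statement to be proved, and it is \emph{not} what \cite{7} does. A~priori nothing prevents $M_n$ from looking, on $99\%$ of its normalized measure, like a fixed finite-volume $M=\Lambda\backslash X$, while carrying the remaining $1\%$ in a region of enormous volume; this would give $\mu_{\Gamma_n}\to\mu_\Lambda$ with $\vol(M_n)\to\infty$.

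The paper excludes this using property~(T) a second time, via the uniform lower bound $C>0$ it gives on the Cheeger constant of every finite-volume $X$-manifold. Choose $p\in M$ and $r$ with $\vol\big(B_M(p,r-1)\big)>(1-C)\vol(M)$. Benjamini--Schramm convergence $M_n\to M$ means that for large $n$ a positive-probability point $q\in M_n$ has its $(r+1)$-ball isometric to $B_M(p,r+1)$, hence
\[
\frac{\vol\big(B_{M_n}(q,r+1)\setminus B_{M_n}(q,r-1)\big)}{\vol\big(B_{M_n}(q,r-1)\big)}<C.
\]
Since $\vol(M_n)\to\infty$ while $\vol\big(B_{M_n}(q,r+1)\big)\le\vol(M)$ is bounded, the complement $M_n\setminus B_{M_n}(q,r+1)$ eventually has volume exceeding $\vol\big(B_{M_n}(q,r-1)\big)$, and the sphere of radius $r$ around $q$ becomes a separating set violating the Cheeger bound $C$. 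This Cheeger-constant argument is the missing idea in your proposal.
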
 

This means that for any $r$ and $\gep$ there is $V(r,\gep)$ such that if $M$ is an $X$-manifold of volume $v\ge V(r,\gep)$ then $\frac{\vol(M_{\ge r})}{v}\ge 1-\gep$ (see Figure \ref{whale}).

\begin{figure}[h]
    \centering
    \includegraphics[width=0.8\textwidth]{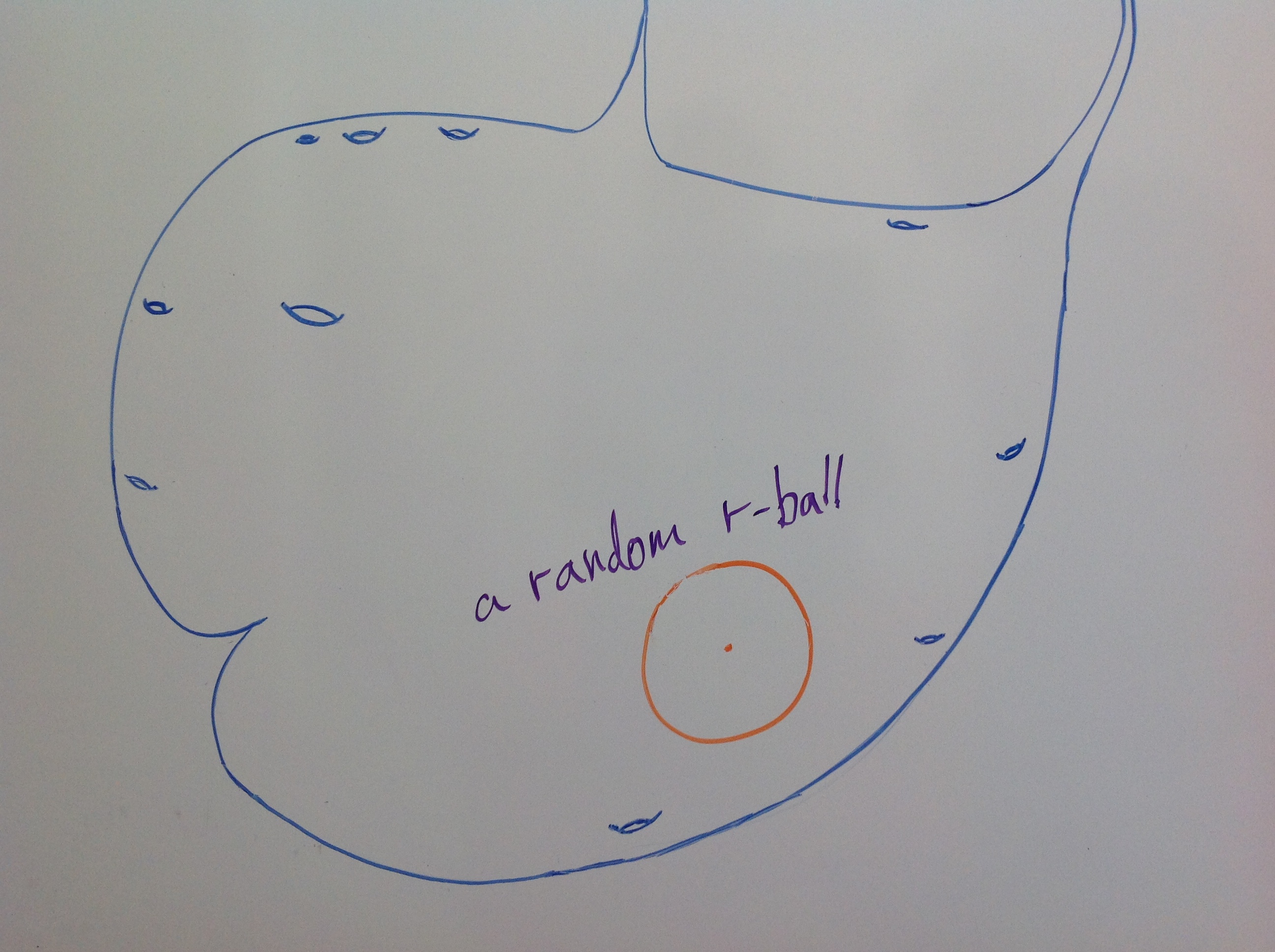}
    \caption{A large volume manifold}
    \label{whale}
\end{figure}

Using the dictionary from the previous section we may reformulate Theorem \ref{thm:main} in the language of IRS:

\begin{thm}[\cite{7}]\label{thm:main-IRS}
Let $\gC_n\le G$ be a sequence of lattices with $\vol(G/\gC_n)\to\infty$ and denote by $\mu_n$ the corresponding IRS. Then $\mu_n\to \gd_{\{1\}}$.
\end{thm}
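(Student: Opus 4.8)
The plan is to show directly that the push-forward measures $\mu_n$ on $\mathrm{Sub}_G$ converge weak-$*$ to $\delta_{\{1\}}$, which by definition of the Chabauty topology amounts to proving that for every compact identity neighbourhood $\Omega\subset G$ and every $\varepsilon>0$, one has $\mu_n(\{\Gamma\in\mathrm{Sub}_G:\Gamma\cap\Omega\neq\{1\}\})<\varepsilon$ for all large $n$. Equivalently, using the dictionary of the previous section, this is exactly the statement that $M_n=\Gamma_n\backslash X\to X$ in the Benjamini--Schramm topology, i.e.\ $\vol((M_n)_{\ge r})/\vol(M_n)\to 1$ for every $r>0$; so Theorem~\ref{thm:main-IRS} and Theorem~\ref{thm:main} are two formulations of one fact, and I would prove the IRS version. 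First I would pass to a weak-$*$ limit: by compactness of $\mathrm{IRS}(G)$ (Alaoglu), any subsequence of $(\mu_n)$ has a further subsequence converging to some $\nu\in\mathrm{IRS}(G)$, and it suffices to show every such limit $\nu$ equals $\delta_{\{1\}}$.

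The heart of the argument is to rule out that $\nu$ charges non-trivial discrete subgroups. Each $\mu_n$ is supported on (conjugates of) the lattice $\Gamma_n$, hence on discrete subgroups; since $\mathrm{IRS}_d(G)$ is compact (it is closed in $\mathrm{IRS}(G)$ because $G$ is isolated in $\mathrm{Sub}_G$, as noted after Theorem~\ref{thm:BDT}), the limit $\nu$ is again a discrete IRS. Now I would invoke higher rank: the key input is the Stuck--Zimmer rigidity theorem (Theorem~\ref{thm:SZ}), which forces every non-atomic ergodic IRS of $G$ to be of the form $\mu_\Lambda$ for an actual lattice $\Lambda$. Decomposing $\nu$ into ergodic components, each non-atomic component is some $\mu_{\Lambda}$; one then needs a lower-volume-bound / finiteness input to control these. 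Here is where the volume hypothesis enters: the Kazhdan--Margulis theorem (equivalently Theorem~\ref{thm:WUD}, weak uniform discreteness of $\mathrm{IRS}_d(G)$) gives a uniform identity neighbourhood $U$ with $\nu(\{\Gamma\cap U\neq\{1\}\})$ as small as we like; combined with the fact that for lattices the co-volume is bounded below, an ergodic component $\mu_\Lambda$ appearing in $\nu$ would have to come from lattices of bounded co-volume, whereas $\mathrm{vol}(G/\Gamma_n)\to\infty$ should push all the mass off of any fixed bounded-co-volume stratum.

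The main obstacle, and the part requiring genuine work, is precisely this last "escape of mass to infinity" step: showing that because $\mathrm{vol}(G/\Gamma_n)\to\infty$, the limit $\nu$ cannot have a non-atomic part at all, so that $\nu$ is supported on atoms, which (as $G$ is simple) can only be $\{1\}$ or $G$, and the $G$-atom is excluded since each $\mu_n$ is discrete and $\mathrm{IRS}_d(G)$ is closed. To make the escape-of-mass precise I would argue by contradiction: if $\nu$ had an ergodic component $\mu_\Lambda$ of positive weight, then for a suitable compact set $C\subset\mathrm{Sub}_G$ meeting only subgroups "of bounded co-volume type" one would have $\liminf_n\mu_n(C)>0$; but $\mu_n$ is the uniform measure on the conjugacy class of $\Gamma_n$, and a positive proportion of that class landing in $C$ would bound $\mathrm{vol}(G/\Gamma_n)$ from above via an integral-geometry / Siegel-type estimate, contradicting $\mathrm{vol}(G/\Gamma_n)\to\infty$. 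An alternative, cleaner route avoiding the full Stuck--Zimmer machinery is to prove the Benjamini--Schramm statement geometrically: bound the measure of the $r$-thin part of $M_n$ using that the thin part, where $\mathrm{InjRad}<r$, is a tube around short geodesics, and in higher rank (using property (T) and the structure of the thin part à la Margulis) its volume is controlled sublinearly in $\mathrm{vol}(M_n)$; this is in fact how \cite{7} proceeds, and I would present that as the backbone, using the IRS reformulation only to state the conclusion.
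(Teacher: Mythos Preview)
Your broad architecture---pass to a subsequential weak-$*$ limit, invoke Stuck--Zimmer to classify the possibilities, then rule out $\delta_G$ and $\mu_\Lambda$---matches the paper's. But two of your key steps diverge from the paper, and in each case your version is either unnecessarily complicated or left as a placeholder.

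First, you decompose the limit $\nu$ into ergodic components and then try to argue about each. The paper avoids this entirely: since $G$ has property~(T), the Glasner--Weiss theorem (Theorem~\ref{thm:GW}) says a weak-$*$ limit of ergodic invariant measures is already ergodic. Each $\mu_n$ is ergodic (it arises from the transitive action on $G/\Gamma_n$), so the limit $\nu$ is itself ergodic, and one is immediately in the trichotomy $\delta_G$, $\delta_{\{1\}}$, $\mu_\Lambda$ with no decomposition needed. You never mention this ingredient, and without it your ergodic-decomposition route would require controlling a possibly continuous family of $\mu_\Lambda$'s, which your ``Siegel-type estimate'' sketch does not actually do.

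Second, your exclusion of the case $\nu=\mu_\Lambda$ is where the real content lies, and neither of your two suggestions is what the paper does. The paper's argument is a Cheeger-constant contradiction: property~(T) gives a uniform lower bound $C>0$ on Cheeger constants of all $X$-manifolds; choose $r$ so that an $r$-ball in $M=\Lambda\backslash X$ captures more than $(1-C)\cdot\vol(M)$; Benjamini--Schramm convergence $M_n\to M$ transplants this ball into $M_n$ with positive probability; but since $\vol(M_n)\to\infty$, the complement of that ball in $M_n$ is huge, producing a separating set violating the Cheeger bound. Your ``bounded-co-volume compact set plus Siegel-type estimate'' is vague as stated (what is the compact $C\subset\sub_G$, and how exactly does positive $\mu_n$-mass on it bound $\vol(G/\Gamma_n)$?), and your alternative---a direct sublinear bound on the thin part via Margulis' thick--thin decomposition---is \emph{not} how the argument in these notes (or in \cite{7}) goes; that is a misattribution.
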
 

The proof makes use of the equivalence between the two formulations. The main ingredients in the proof are the Stuck--Zimmer rigidity theorem \ref{thm:SZ} and Kazhdan's property (T), which will be used at two places (in addition property (T) is used in the proof of \ref{thm:SZ}). 

Recall that by Kazhdan's theorem, $G$ has property (T). This implies that a limit of ergodic measures is ergodic:

\begin{thm}[\cite{GW}]\label{thm:GW}
Let $G$ be a group with property (T) acting by homeomorphisms on a compact Hausdorff space $X$. Then the set of ergodic $G$-invariant probability Borel measures on $X$ is $w^*$-closed.
\end{thm}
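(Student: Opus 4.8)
The plan is to prove Theorem~\ref{thm:GW}: the set of ergodic $G$-invariant Borel probability measures on a compact $G$-space $X$ is weak-$*$ closed when $G$ has property (T). First I would set up the standard picture: let $\mathcal{M} = \mathrm{Prob}(X)^G$ be the (nonempty, convex, weak-$*$ compact) simplex of invariant probability measures, and recall that a measure $\mu \in \mathcal{M}$ is ergodic if and only if it is an extreme point of $\mathcal{M}$. The goal is then to show that the extreme points of $\mathcal{M}$ form a closed set. Suppose $\mu_n \to \mu$ with each $\mu_n$ ergodic; I want to show $\mu$ is ergodic.

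The key idea is to use property (T) in its spectral-gap formulation on the Koopman representation. For $\nu \in \mathcal{M}$, consider the Koopman representation $\pi_\nu$ of $G$ on $L^2_0(X,\nu)$ (the orthogonal complement of the constants). Ergodicity of $\nu$ is equivalent to $\pi_\nu$ having no nonzero invariant vectors. Property (T) gives a Kazhdan pair $(Q,\kappa)$ — a compact $Q \subset G$ and $\kappa > 0$ — such that any unitary representation of $G$ with a $(Q,\kappa)$-almost-invariant unit vector has a nonzero invariant vector. The strategy: if $\mu$ were not ergodic, then $\pi_\mu$ would have a nonzero invariant vector, i.e. a non-constant $L^2(\mu)$ function $f$ with $g\cdot f = f$ for all $g$; equivalently $\mu$ decomposes non-trivially as $\mu = t\mu' + (1-t)\mu''$ with $\mu',\mu'' \in \mathcal{M}$ distinct. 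The contradiction should come from transporting this splitting back to the $\mu_n$: I would pick a continuous function $h$ on $X$ that "sees" the splitting (for instance $h$ close in $L^1(\mu)$ to an indicator of a positive-measure invariant set that is not null/conull), note that $\int h\, d\mu_n \to \int h\, d\mu$, and more importantly that the function $g \mapsto \|\pi_{\mu_n}(g)P_n h - P_n h\|$ (where $P_n$ projects off the constants in $L^2(\mu_n)$) is small uniformly in $g$ for large $n$, because weak-$*$ convergence controls $\int |h(gx) - h(x)|\, d\mu_n$ via the continuity of $(g,x)\mapsto h(gx)$. This would make $P_n h$ a $(Q,\kappa)$-almost-invariant vector in $L^2_0(X,\mu_n)$ of non-vanishing norm, so by property (T) each $\pi_{\mu_n}$ has a nonzero invariant vector, contradicting ergodicity of $\mu_n$.

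Let me reorganize this more carefully, since the logic should run contrapositive. Assuming $\mu$ non-ergodic, fix a $G$-invariant Borel set $A$ with $0 < \mu(A) < 1$. By regularity choose a continuous $h\colon X \to [0,1]$ with $\|h - \mathbf{1}_A\|_{L^1(\mu)}$ as small as desired; then $\operatorname{Var}_\mu(h)$ is bounded below (away from $0$), and for every $g \in G$, $\|g\cdot h - h\|_{L^2(\mu)}$ is small — in fact we can make $\sup_{g\in Q}\|g\cdot h - h\|_{L^2(\mu)}$ as small as we like, using that $h\approx\mathbf 1_A$, that $A$ is invariant, and that $Q$ is compact so the modulus of continuity of $h$ along the $Q$-action is uniform. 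Now I pass to $n$ large: $\operatorname{Var}_{\mu_n}(h) \to \operatorname{Var}_\mu(h) > 0$ since $h, h^2$ are continuous, and $\sup_{g\in Q}\|g\cdot h - h\|_{L^2(\mu_n)} \to \sup_{g\in Q}\|g\cdot h - h\|_{L^2(\mu)}$ — here I would want to be a little careful and instead bound $\|g\cdot h - h\|^2_{L^2(\mu_n)} = \int |h(gx)-h(x)|^2 d\mu_n(x)$; since $(g,x) \mapsto |h(gx)-h(x)|^2$ is continuous on $Q \times X$ and $Q$ is compact, a standard equicontinuity/partition argument gives uniform convergence in $g \in Q$. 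So for $n$ large, $v_n := P_n h = h - \int h\,d\mu_n \in L^2_0(X,\mu_n)$ satisfies $\|v_n\| \geq c > 0$ and $\sup_{g \in Q}\|\pi_{\mu_n}(g)v_n - v_n\| < \kappa \|v_n\|$. By the Kazhdan property this forces a nonzero $G$-invariant vector in $L^2_0(X,\mu_n)$, i.e. $\mu_n$ is not ergodic — contradiction. Hence $\mu$ is ergodic, proving closedness.

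The main obstacle I anticipate is the uniformity over the Kazhdan set $Q$: making precise that the almost-invariance constant can be pushed below $\kappa$ \emph{simultaneously} for all $g\in Q$, both in the approximation $\mathbf 1_A \rightsquigarrow h$ step (where one uses that $A$ is invariant, so $g\cdot\mathbf 1_A = \mathbf 1_A$ exactly, and only the error $h - \mathbf 1_A$ propagates, times a factor that is uniform over $Q$ by continuity of the action map $Q\times X\to X$) and in the limit-transfer step to $\mu_n$ (compactness of $Q$ plus weak-$*$ convergence). A secondary technical point is ensuring $\|v_n\|$ stays bounded below; this is where choosing $h$ to genuinely approximate a non-trivial invariant indicator, rather than an arbitrary non-constant function, pays off, since then $\operatorname{Var}_\mu(h)$ is controlled explicitly in terms of $\mu(A)(1-\mu(A))$. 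Everything else — the extreme-point characterization of ergodicity, the Koopman formalism, and weak-$*$ convergence of integrals of continuous functions — is routine.
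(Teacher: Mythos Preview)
Your proposal is correct and follows essentially the same route as the paper's sketch: assume the limit $\mu$ is non-ergodic, approximate the indicator of a non-trivial invariant set by a continuous function, transport the resulting almost-invariant vector to $L^2_0(\mu_n)$ via weak-$*$ convergence, and invoke property~(T) to force a genuine invariant vector there, contradicting ergodicity of $\mu_n$. You have made precise the points the paper only gestures at (uniformity over the Kazhdan set $Q$ and the lower bound on $\|v_n\|$), but the underlying argument is identical.
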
   

The idea is that if $\mu_n$ are probability measures converging to a limit $\mu$ and $\mu$ is not ergodic, then there is a continuous function $f$ on $X$ which, as a function in $L_2(\mu)$ is $G$-invariant, orthogonal to the constants and with norm $1$. Thus for large $n$ we have that $f$ is almost invariant in $L_2(\mu_n)$, almost orthogonal to the constants and with norm almost $1$. Since $G$ has property (T) it follows that there is an invariant $L_2(\mu_n)$ function close to $f$, so $\mu_n$ cannot be ergodic.  

\medskip

Let now $\mu_n$ be a sequence as in \ref{thm:main-IRS}, and let $\mu$ be a weak-$*$ limit of $\mu_n$. Our aim is to show that $\mu=\gd_{\{1\}}$. Up to replacing $\mu_n$ by a subnet, we may suppose that $\mu_n\to \mu$. Let $M_n=\gC_n\backslash X$ be the corresponding manifolds, as in \ref{thm:main}. By Theorem \ref{thm:GW} we know that $\mu$ is ergodic. The following result is a consequence of Theorem \ref{thm:SZ}:

\begin{prop}
The only ergodic IRS on $G$ are $\gd_G,\gd_{\{1\}}$ and $\mu_\gC$ for $\gC\le G$ a lattice. 
\end{prop}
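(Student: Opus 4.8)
The plan is to deduce this classification from the Stuck--Zimmer theorem (Theorem \ref{thm:SZ}) together with the Borel density theorem for IRS (Theorem \ref{thm:BDT}). Let $\mu$ be an ergodic IRS on $G$. First I would split into the atomic and non-atomic cases. If $\mu$ has an atom, then by ergodicity $\mu$ must be supported on a single conjugacy class of closed subgroups (the atom condition forces the orbit of the atom to have full measure, since the measure of that orbit is positive and $G$-invariant). Since $G$ is simple, the only conjugation-invariant atoms are $\delta_G$ and $\delta_{\{1\}}$: a subgroup $H$ whose conjugacy class carries positive mass must be normal, hence $H=\{1\}$ or $H=G$ (one should note here that for $H=G$ no issue arises, and for other normal subgroups there are none; for a non-normal $H$ the conjugacy class is genuinely infinite and, being a measurable orbit of positive measure, this contradicts non-atomicity of the push-forward unless the stabilizer $N_G(H)$ has finite covolume, which again by simplicity forces $H$ trivial or everything). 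So in the atomic case $\mu\in\{\delta_G,\delta_{\{1\}}\}$.

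Next, suppose $\mu$ is non-atomic. By Theorem \ref{thm:BDT} (Borel density for IRS), a $\mu$-random subgroup $H$ is almost surely discrete and Zariski dense. Now apply Theorem \ref{thm:satbilizers}: there is an ergodic p.m.p.\ action $G\curvearrowright(X,m)$ whose stabilizer map pushes $m$ to $\mu$. Since $\mu$ is non-atomic, this action is not essentially transitive onto a point-stabilizer setup with full subgroup; more precisely the stabilizers are a.s.\ discrete and proper, so the action is not the trivial action on a point. Then by the Stuck--Zimmer theorem, the ergodic p.m.p.\ $G$-action $G\curvearrowright(X,m)$ is either essentially free or transitive. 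If it is transitive, then $(X,m)=(G/\gC,\text{Haar})$ for some closed subgroup $\gC$ of finite covolume, whose stabilizers are the conjugates of $\gC$; since stabilizers are a.s.\ discrete, $\gC$ is a lattice and $\mu=\mu_\gC$. If it is essentially free, then the stabilizer is a.s.\ trivial, so $\mu=\delta_{\{1\}}$, which we have already listed (and which is in fact atomic, a degenerate case). This exhausts all possibilities.

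The one place I would be most careful is the reduction in the essentially-free/transitive dichotomy to make sure the Stuck--Zimmer hypotheses are genuinely met: Theorem \ref{thm:SZ} as quoted requires $G$ connected simple of real rank $\geq 2$ (here $G$ is only assumed non-compact simple Lie in the surrounding discussion, but in Section \ref{section:HR} we are precisely in the higher-rank setting, so this is fine), and requires the action to be ergodic and p.m.p., which is exactly what Theorem \ref{thm:satbilizers} provides. The subtle point is that the action produced by Theorem \ref{thm:satbilizers} has stabilizer map pushing forward to $\mu$, but one must check that ergodicity of $\mu$ transfers to ergodicity of the constructed $(X,m)$; this is built into the cited construction (the space $\mathrm{Cos}_G\times\BR$ with its Poisson process is an extension, and ergodicity can be arranged, or one passes to ergodic components and handles them separately, noting each ergodic component of $\mu$ is still an IRS of one of the three types). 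I expect the genuine mathematical content to all sit in the two black boxes \ref{thm:SZ} and \ref{thm:BDT}; the remainder is bookkeeping about atoms, ergodicity, and translating ``transitive action'' into ``$\mu_\gC$ for a lattice $\gC$''.
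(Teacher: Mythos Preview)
Your proof is correct and uses the same three ingredients as the paper---Theorem~\ref{thm:satbilizers}, the Stuck--Zimmer theorem, and Borel density---in essentially the same way. The only organisational difference is that you first split into atomic versus non-atomic and invoke the IRS version of Borel density (Theorem~\ref{thm:BDT}) to get discreteness before applying Stuck--Zimmer, whereas the paper applies Stuck--Zimmer immediately and then, in the transitive case, uses the classical Borel density theorem to conclude that a closed finite-covolume subgroup of a simple Lie group is either $G$ itself or a lattice; your route is slightly longer but the content is the same, and your remark about arranging ergodicity of the realising action (by passing to an ergodic component) fills a point the paper leaves implicit.
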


\begin{proof}
Let $\mu$ be an ergodic IRS on $G$. By Theorem \ref{thm:satbilizers} $\mu$ is the stabilizer of some pmp action $G\act (X,m)$. By Theorem \ref{thm:SZ} the latter action is either essentially free, in which case $\mu=\gd_{\{1\}}$, or transitive, in which case the (random) stabilizer is a subgroup of co-finite volume. The Borel density theorem implies that in the latter case, the stabilizer is either $G$ or a lattice $\gC\le G$.
\end{proof}

Thus, in order to prove Theorem \ref{thm:main-IRS} we have to exclude the cases $\mu=\gd_G$ and $\mu=\mu_\gC$. The case $\mu=\gd_G$ is impossible since $G$ is an isolated point in $\text{Sub}_G$ (see \ref{prop:G-isolated}). Let us now suppose that $\mu=\mu_\gC$ for some lattice $\gC\le G$ and aim towards a contradiction. For this, we will adopt the formulation of \ref{thm:main}. Thus we suppose that $M_n\to M=\gC \backslash X$.

Recall that Property (T) of $G$ implies that there is a lower bound $C>0$ for the Cheeger constant of all finite volume $X$-manifolds. For our purposes, the Cheeger constant of a manifold $M$ can be defined as the infimum of 
$$
 \frac{\vol (N_1(S))}{\min \{\vol(M_i)\}},
$$
where $S$ is a subset which disconnects the manifold, $N_1(S)$ is its $1$-neighbourhood, and $M_i$ is chosen from the connected pieces of $M\setminus S$. 

Since $M=\gC\backslash X$ has finite volume we may pick a point $p\in M$ and $r$ large enough so that the volume of $B_M(p,r-1)$, the $r-1$ ball around $p$ in $M$, 
is greater than $\vol(M)(1-C)$ (note that if $M$ is compact we may even take a ball that covers $M$). In particular, when taking $S=\{x\in M:d(x,p)=r\}$ we have that $\frac{\vol(N_1(S))}{\vol(B_M(p,r-1))}<C$. This on itself does not contradict property (T) since the complement of $B_M(p,r+1)$ is very small. 

Now since $M_n$ converges to $M$ in the BS-topology, it follows that for large $n$, a random point $q$ in $M_n$ with positive probability satisfies that 
$$
 \frac{\vol(B_{M_n}(q,r+1)\setminus B_{M_n}(q,r-1))}{\vol(B_{M_n}(q,r-1))}<C.
$$ 
Bearing in mind that $\vol(M_n)\to\infty$, we get that for large $n$, the complement $M_n\setminus B_{M_n}(q,r+1)$ has arbitrarily large volume, and in particular greater than $\vol(B_{M_n}(q,r-1))$, see Figure \ref{fig:Cheeger}. Now, this contradicts the assumption that $C$ is the Cheeger constant of $X$.\qed


\begin{figure}[h]\label{fig:Cheeger}
    \centering
    \includegraphics[width=0.8\textwidth]{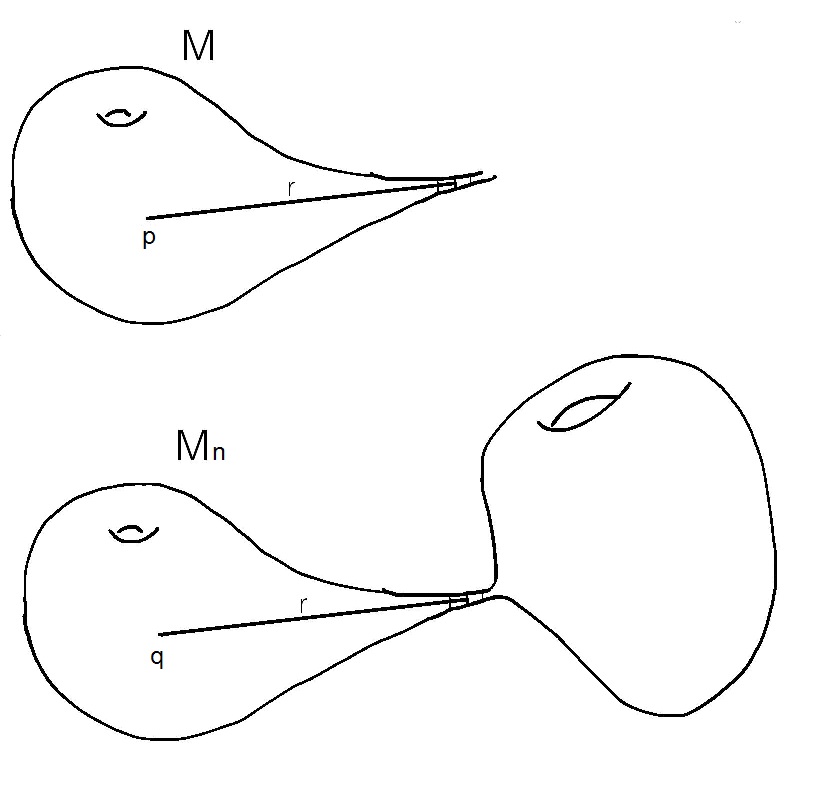}
    \caption{The Cheeger constant of $M_n$ is too small.}
    \label{whale}
\end{figure}


\medskip

Note that Theorems \ref{thm:main} and \ref{thm:main-IRS} can also be formulated as:
\begin{itemize}
\item The set of extreme points in IRS(G) (the ergodic IRS) is closed and equals $\{\gd_G,\gd_{\{1\}},\mu_\gC, \gC\le G~\text{a lattice}\}$ and its unique accumulation point is $\gd_{\{1\}}$,
\end{itemize}

or as:
\begin{itemize}
\item The space of geodesic flow invariant probability measures on $\mathcal{M}(X)$ is compact and convex, its extreme points are the finite volume $X$-manifolds and the space $X$, and the later being the only accumulation point. 
\end{itemize}

Finally let us note that Theorem \ref{thm:main} has many applications in the theory of asymptotic invariants, and in particular $L_2$-invariant, of arithmetic groups and locally symmetric manifolds. Most of the work \cite{7} is dedicated to such asymptotic results and our main new ingredient is Theorem \ref{thm:main}.
For instance, one quite immediate application is that if $M_n$ is a sequence of uniformly discrete (i.e. with a uniform lower bound on the injectivity radius) $X$-manifolds with volume $\vol(M_n)\to\infty$ then the normalised betti numbers converge to a limit
$$
 \frac{b_i(M_n)}{\vol(M_n)}\to b_i^{(2)}(X),
$$
and the limit $b_i^{(2)}(X)$ is computable, and vanishes for $i\ne\dim(X)/2$ (cf. \cite{7-note}).

\end{document}